\declaretheorem{theorem}
\declaretheorem{lemma}
\declaretheorem{proposition}
\declaretheoremstyle[qed=$\square$]{definitionwithend}
\declaretheorem[style=definitionwithend]{remark}
\crefname{assumption}{Assumption}{Assumptions}
\definecolor{gold}{rgb}{0.85,0.65,0}
\newcommand{\by}{\times}
\newcommand{\norm}[1]{\ensuremath{\left\lVert #1 \right\rVert}}
\newcommand{\ip}[1]{\ensuremath{\left\langle #1 \right\rangle}}
\newcommand{\grad}{\ensuremath{\nabla}}
\newcommand{\set}[1]{\left\{#1\right\}}
\newcommand{\mb}{\mathbf}
\def\R{{\mathbb{R}}}
\def\cF{{\cal F}}
\def\cH{{\cal H}}
\def\cI{{\cal I}}
\DeclareMathOperator*{\argmin}{arg\,min}
\DeclareMathOperator{\range}{range}
\DeclareMathOperator{\spann}{span}
\newcommand{\framedheader}[3]{
  \framebox[\textwidth]{
    \vbox{
      \vspace{2mm}
      \hbox to \textwidth {\hspace{1em}\today #1 \hfill #2\hspace{1em}}
      \vspace{4mm}
      \hbox to \textwidth {\hfill \Large{#3} \hfill}
      \vspace{2mm}
    }
  }
  \vspace*{4mm}
} \usepackage{soul}
\newcommand{\btau}{\pmb{\tau}}
\newcommand{\bo}{\textbf{1}}
\newcommand{\bh}{\textbf{h}}
\newcommand{\bq}{\textbf{q}}
\begin{document}

\title{Beyond Minimax Optimality: A Subgame Perfect Gradient Method}
\author{
    Benjamin Grimmer\footnote{Johns Hopkins University, Department of Applied Mathematics and Statistics, \texttt{grimmer@jhu.edu}}
    \and
    Kevin Shu\footnote{California Institute of Technology, Computational and Mathematical Sciences, \texttt{kshu@caltech.edu}}
    \and
    Alex L.\ Wang\footnote{Purdue University, Daniels School of Business, \texttt{wang5984@purdue.edu}}
}
	\date{\today}
	\maketitle

\abstract{%
	The study of convex optimization has historically been concerned with worst-case convergence rates.
	The development of the Optimized Gradient Method (OGM), due to \cite{drori2012PerformanceOF,Kim2016optimal}, marked a major milestone in this study, as OGM achieves the optimal worst-case convergence rate among all first-order methods for unconstrained smooth convex optimization.
	In order to examine the possibility of obtaining stronger convergence guarantees, we will consider algorithms with \emph{dynamic} convergence rates, which may improve as additional first-order information is revealed.
	Our main contribution is the development of an algorithm, the Subgame Perfect Gradient Method (SPGM), which refines OGM to make use of the full history of first-order information.
	We show that SPGM is \emph{dynamically optimal}, in the sense that in each iteration, no other algorithm can offer a strictly better convergence rate on all functions which agree with the observed first-order information up to that iteration.
	We formalize this notion of dynamic optimality using the game-theoretic notion of a subgame perfect equilibrium.
	We conclude our study with preliminary numerical experiments showing that SPGM strongly outperforms OGM.
}

\section{Introduction}
This paper considers black-box first-order methods for smooth convex minimization.
Let $f : \R^d \rightarrow \R$ be an $L$-smooth convex function with a minimizer $x_\star \in \R^d$. We wish to construct algorithms that effectively minimize the normalized suboptimality:
\[
\frac{f(x_N) - f(x_\star)}{L\|x_0 - x_\star\|^2/2},
\]
where $x_N$ is the final iterate of the method and $x_0$ is the starting iterate of the method. Throughout all norms denote the Euclidean norm associated with inner product $\langle \cdot,\cdot\rangle$.

Classically, the design of first-order methods has focused on algorithms with good (or in some cases, optimal) worst-case guarantees~\cite{nemirovskij1983problem}.
In \cite{drori2012PerformanceOF,Kim2016optimal}, a momentum based first-order method known as the Optimized Gradient Method (OGM) was exhibited, and this method was shown in \cite{drori2017exact} to achieve the best possible worst-case performance on the class of $L$-smooth convex functions. This is often referred to as \emph{minimax optimality}.
To be precise, it was shown in \cite{Kim2016optimal} that for any $L$-smooth convex function $f$, OGM guarantees
\[
\frac{f(x_N) - f(x_\star)}{L\|x_0 - x_\star\|^2/2} \le \frac{1}{\tau_{0,N}},
\]
where we define $\tau_{0,N}$ in \eqref{eq:ogm-recurrence}.\footnote{We state the \eqref{eq:ogm-recurrence} 
	in terms of a sequence $\tau_{0,0},\tau_{0,1},\dots,\tau_{0,N}$. This differs from the original presentation in \cite[Equation 6.13]{Kim2016optimal} that involves a sequence $\theta_0,\dots,\theta_N$. Equivalence between the two recurrences follows from the change of variables: 
	$\tau_{0,i} = 2\theta_i^2$ for $i=0,\dots,N-1$ and $\tau_{0,N}=\theta_N^2$.}
For context, the quantity $\tau_{0,N}$ grows asymptotically like $\tau_{0,N} \approx \frac{N^2}{2}$.
It was later shown in \cite{drori2017exact} that as long as $d \geq N+2$, there exists an $L$-smooth convex function $f_{\textup{hard}}$ so that for any gradient span first-order method, 
\[
\frac{f_{\textup{hard}}(x_N) - f_{\textup{hard}}(x_\star)}{L\|x_0 - x_\star\|^2/2} \ge \frac{1}{\tau_{0,N}}.
\]
Recall, a first-order method is a gradient span method if at each iteration, $x_i \in x_0 + \spann \{\nabla f(x_0), \dots, \nabla f(x_{i-1})\}$.

\begin{figure}[tb]
	\centering
	
	\begin{tikzpicture}[scale=2.1]
		\def\xs{1, -0.6180339887498949, 0.45588678010286676, -0.3636639571190878, 0.30350121938992136
		};
		
		\draw[->] (-1.25, 0) -- (1.25, 0) node[right] {$x$};
		\draw[->] (0, -0.25) -- (0, 1);
		
		\node[right] at (1.25, 0.5 * 1.25 * 1.25) {$f(x)$};
		\draw[domain=-1.25:1.25, smooth, variable=\x]  plot ({\x}, {\x * \x / 2});
		
		\foreach \x [count=\n from 0] in \xs {
			\node at (\x, \x * \x / 2) [circle,fill,inner sep=1.5pt,label=below:$x_{\n}$]{};
		}
		
		\foreach \x [count=\n from 0, remember=\x as \prev] in \xs {
			\ifnum \n > 0
			\draw[dashed] (\prev, {0.5 * \prev * \prev}) -- (\x, {0.5 * \x * \x});
			\fi
		}
	\end{tikzpicture}\quad
	\begin{tikzpicture}[scale=2.1]
		\def\xs{1, -0.6180339887498949, 0
		};
		
		\draw[->] (-1.25, 0) -- (1.25, 0) node[right] {$x$};
		\draw[->] (0, -0.25) -- (0, 1);
		
		\node[right] at (1.25, 0.5 * 1.25 * 1.25) {$f(x)$};
		\draw[domain=-1.25:1.25, smooth, variable=\x]  plot ({\x}, {\x * \x / 2});
		
		\foreach \x [count=\n from 0] in \xs {
			\node at (\x, \x * \x / 2) [circle,fill,inner sep=1.5pt,label=below:$x_{\n}$]{};
		}
		
		\foreach \x [count=\n from 0, remember=\x as \prev] in \xs {
			\ifnum \n > 0
			\draw[dashed] (\prev, {0.5 * \prev * \prev}) -- (\x, {0.5 * \x * \x});
			\fi
		}
	\end{tikzpicture}
	\caption{Left: The first five iterates of OGM on $f(x) =Lx^2/2$ with $x_0 = 1$. OGM produces $x_4 \approx 0.304$. Right: The first three iterates of SPGM on $f(x) =Lx^2/2$ with $x_0 = 1$. After seeing the history $\cH=\set{(x_0,f_0,g_0),(x_1,f_1,g_1)}$, SPGM determines that $0$ is a minimizer for \emph{any} $L$-smooth convex function agreeing with $\cH$. In effect, the history $\cH$ completely determines the function $f$ on the interval $[x_0,x_1]$.}
	\label{fig:example_suboptimal}
\end{figure}

Crucially, note that OGM's convergence rate, determined by $\tau_{0,N}$, does not adapt as a function of the first-order information actually encountered by the algorithm. The step sizes and momentum terms used in each OGM iteration similarly do not depend on the first-order information encountered by the algorithm.
This suggests the possibility that both the analysis and the updates in OGM may be improved by taking more of the learned first-order information into account.
As a simple example, consider the quadratic function $f(x)= \frac{L}{2}x^2$ with initial iterate $x_0=1$ (see \cref{fig:example_suboptimal}).
It is known~\cite[Theorem 5.1]{kim2017convergence} that OGM's behavior on this function satisfies $\frac{f(x_N)- f(x_\star)}{L\norm{x_0-x_\star}^2/2} = \frac{1}{\tau_{0,N}}$, i.e., $\frac{L}{2}x^2$ is in fact a worst-case instance for OGM.
However, we will see that an algorithm which remembers first-order queries can, in fact, identify a minimizer of this function given just two first-order queries.
We would like to formalize the advantage that one can obtain by making use of the history of first-order information given to the algorithm, and also develop algorithms which make full use of that advantage.

Our first main contribution is a new algorithm, the Subgame Perfect Gradient Method (SPGM), which is naturally viewed as a refinement of OGM, that makes use of observed first-order information in order to achieve stronger dynamic convergence rates. That is, if the first-order information revealed to the algorithm happens to be informative, then the algorithm will be able to leverage that information to both dynamically update its remaining update rules and its convergence guarantees. These dynamic convergence guarantees can be orders of magnitude stronger than the static guarantees of traditional methods like OGM or adaptive methods, discussed in Section~\ref{subsec:relatedWork}. Section~\ref{sec:numerical} provides preliminary numerics demonstrating such gains.

Our second main contribution is to show that SPGM is in fact \emph{dynamically optimal} in a certain sense. To do this, we will need to introduce a notion of dynamic optimality for first order algorithms, which we refer to as being \emph{subgame perfect}.
This optimality notion requires an algorithm to provide optimal guarantees at every stage of its execution, as a function of the observed first-order information up to that stage.
The term ``subgame perfect'' references the game-theoretic notion of a ``subgame perfect equilibrium'', which will be defined shortly.

We present the game-theoretic framework next and then state precisely SPGM's performance guarantees in \cref{thm:subgamePerfect}.

\subsection{A game-theoretic framework for analysis of first-order methods}
\label{subsec:game}
% \subsubsection{The Minimization Game}
We consider a zero-sum game\footnote{
	The view of optimization algorithms as playing a game against an adversary is not new and dates back at least to \cite{nemirovskij1983problem}.
}, which we refer to as the \emph{minimization game}.
This game is played between two players, Alice and Bob, where
Alice represents the first-order method and Bob represents the first-order oracle.
We will say that a set of triples $\{(x_i, f_i, g_i)\}_{i\in\cI}$ indexed by some index set $\cI$ is \emph{interpolable} if there exists an $L$-smooth convex function $f$ so that $f(x_i) = f_i$ and $\nabla f(x_i) = g_i$ for all $i\in\cI$.

For a fixed $d \ge 0$, the game proceeds in $N+1$ rounds, indexed by $n = 0, \dots, N$.
In round $n$, Alice chooses a point $x_n \in x_0 + \spann\set{g_0,\dots,g_{n-1}}$ representing a query point, and Bob responds with a pair $(f_n, g_n)$ with $f_n \in \R$ and $g_n \in \R^d$ 
% in such a way that there exists a function $f : \R^d \rightarrow \R$
so that $\{(x_i, f_i, g_i)\}_{i=0}^n$ is interpolable.
At the end of round $N$, Bob additionally specifies a triple $(x_\star, f_\star, g_\star) \in \R^{d}\times \R\times\R^d$ with $g_\star=0$ so that $\{(x_i, f_i, g_i)\}_{i\in[0,N]\cup\set{\star}}$ is interpolable.
Alice's payoff is the reciprocal of the normalized suboptimality:
\[
\frac{L\norm{x_0 - x_\star}^2/2}{f(x_N) - f(x_\star)}.
\]
We interpret this payoff to be $+\infty$ whenever the denominator is zero.

The subgames where Alice has a decision to make can be fully specified by the first-order history $\cH = \set{(x_i,f_i,g_i)}_{i=0}^{n-1}$, which is guaranteed to be interpolable.
A deterministic strategy for Alice in the minimization game is a mapping from each first-order history to a query point $x_{n}\in x_0 + \spann(\set{g_0,\dots,g_{{n-1}}})$.

A strategy for Bob can be defined similarly as a map sending any first-order history $\cH$ and query point $x_n$ to first-order data $(f_n, g_n)$, as well as a map from each first-order history $\set{(x_i,f_i,g_i)}_{i=0}^{N}$ to a choice of minimizer $x_\star$ and minimum $f_\star$.
The payoff of a pair of strategies (from Alice's perspective) is Alice's final payoff if both players make moves in accordance to their respective strategies.

\subsection{Equilibrium notions in the minimization game}

We say that a pair of strategies is in \emph{Nash Equilibrium} if neither player can increase their payoff by changing strategies (while the other player maintains the same strategy).
In the minimization game, one strategy for Alice would be to choose query points consistent with OGM (see \cref{alg:OGM}).
The OGM guarantee states
\begin{equation*}
	f(x_N) - f(x_\star) \leq \frac{L}{2\tau_{0,N}}\norm{x_0-x_\star}^2.
\end{equation*}
That is, Alice is guaranteed a payoff of at least $\tau_{0,N}$ regardless of Bob's play.

One strategy for Bob would be to report the function and gradient values from the $L$-smooth function $f_\textup{hard}$.
This strategy is available as long as $d \geq N+2$.
By \cite[Theorem 3]{drori2017exact}, Bob can guarantee that
\begin{equation*}
	f_N - f_\star \geq \frac{L}{2\tau_{0,N}}\norm{x_0-x_\star}^2
\end{equation*}
regardless of Alice's play, i.e., Alice is guaranteed a payoff of at most $\tau_{0,N}$.
Thus, these two strategies form a Nash Equilibrium; neither player can increase their payoff by deviating their strategies.%, and OGM gives the optimal \emph{a priori} guarantee.

% \section{A partial Subgame Perfect Equilibrium for the minimization game}
% While Nash Equilibrium is perhaps the most popular notion of equilibrium in a game, stronger notions exist.
While the notion of Nash Equilibria is natural in \emph{static} games, finer notions exist for \emph{sequential} games.
A pair of strategies is said to be in \emph{Subgame Perfect Equilibrium} if for every subgame, the restriction of the two strategies to that subgame are in Nash Equilibrium.
This notion is stronger than that of Nash Equilibrium (see \cite{osborne2004introduction} for details on game theory).
To illustrate these ideas,
consider the situation where Alice and Bob respectively play the OGM strategy and first-order history from the function $f(x) = \frac{L}{2}x^2$ for rounds $n=0$ and $n=1$. At the beginning of round $n=2$, Alice will be in the subgame corresponding to $\cH=\set{(x_0,f_0,g_0), (x_1,f_1,g_1)}$. From this point onward, 
if Alice and Bob continue with their strategies, Alice will receive a payoff of exactly $\tau_{0,N}$. On the other hand, Alice may deviate from the OGM strategy in this subgame to guarantee a strictly larger payoff.
Indeed, as depicted in \cref{fig:example_suboptimal},
$x_1-g_1/L$ is guaranteed to be a minimizer for \emph{any} function $f$ interpolating $\cH$.
Thus, by playing the strategy $x_1-g_1/L = x_2 = \dots = x_N$, Alice can guarantee herself an infinite payoff in this subgame, regardless of Bob's strategy.
This shows that OGM does \emph{not} belong to a subgame perfect equilibrium.

This notion is practically important when the function we are trying to optimize is not adversarially chosen.
We may model the non-adversarial nature of typical functions as Bob playing this game ``suboptimally'', i.e., playing a strategy that is not part of a Nash Equilibrium for the subgame.
Thus, a strategy for Alice which is part of a Subgame Perfect Equilibrium can be regarded as an algorithm which capitalizes on suboptimal play to the largest extent possible, while still accounting for the possibility that Bob may begin to play adversarially in the future.

The following theorem states the performance guarantees of SPGM. The upper bound guarantee is proved in Lemma~\ref{lem:n_step_guarantee} and the lower bound construction is presented and proved in Section~\ref{sec:lower_bounds}.
\begin{theorem}
	\label{thm:subgamePerfect}
	For any $0 \leq n \leq N$ and any set of first-order history $\cH=\set{(x_i,f_i,g_i)}_{i\in[0,n-1]}$ generated by SPGM, the output $x_N$ of running SPGM for $N-n+1$ additional iterations satisfies
	\begin{equation*}
		f(x_N) - f(x_\star) \leq \frac{L}{2\tau_{n,N}}\norm{x_0-x_\star}^2\leq \frac{L}{2\tau_{0,N}}\norm{x_0-x_\star}^2,
	\end{equation*}
	where $\tau_{n,N}$ is defined in~\eqref{eq:tau_ni} and depends on the history $\cH$ generated by SPGM.
	Additionally, if $d\geq N+2$, then there exists an $L$-smooth convex function $f:\R^d\to\R$ interpolating $\cH$ such that the output $x_N$ from \emph{any} method satisfying
	\begin{equation}
		\label{eq:gradient_span_condition}
		x_i \in x_0 + \spann\left(\set{g_0,\dots, g_{i-1}}\right),\qquad\forall i\in[n,N]
	\end{equation}
	satisfies
	\begin{equation*}
		f_N-f_\star \geq \frac{L}{2\tau_{n,N}}\norm{x_0-x_\star}^2.
	\end{equation*}
\end{theorem}

This result establishes a dynamic minimax optimality of SPGM with respect to a sequence of subproblem classes. For a given history $\cH=\set{(x_i,f_i,g_i)}_{i\in[0,n-1]}$, let $\mathcal{A}$ denote the family of all gradient span methods and $\mathcal{A}^\cH$ denote the subset of methods that reproduce $x_0,\dots x_{n-1}$ given oracle responses $\cH$ and let $\mathcal{F}$ denote the set of all $L$-smooth convex problems and $\mathcal{F}^\cH$ denote the subset of problems interpolating $\cH$. Then the above guarantee establishes that
$$ \min_{A\in \mathcal{A}^\cH} \max_{f\in \mathcal{F}^\cH} \frac{f(x_N) - f(x_\star)}{\frac{L}{2}\|x_0-x_\star\|^2_2} = \frac{1}{\tau_{n,N}}$$
and is attained by SPGM.

We make explicit some consequences of this statement. Taking $n=0$, we see that Alice can guarantee a payoff of at least $\tau_{0,N}$ after observing $0$ oracle responses. This is the same guarantee provided by the OGM.
Furthermore, if Bob plays optimally throughout the course of the entire game, then no strategy for Alice can guarantee a strictly larger payoff.
In round $n=1$, after observing $\set{(x_0,f_0,g_0)}$, Alice updates her guaranteed payoff to $\tau_{1,N}\geq \tau_{0,N}$.
If Bob had played suboptimally in round $n=0$, then Alice can capitalize on Bob's mistake and guarantee a strictly larger payoff in this subgame, so $\tau_{1,N}>\tau_{0,N}$.
Furthermore, if Bob plays optimally in rounds $n=1,\dots,N$ (i.e., the remainder of the game), then no strategy for Alice can guarantee a strictly larger payoff in this subgame.
Naturally, it will hold that $\tau_{0,N} \leq \tau_{1,N}\leq \dots\leq \tau_{N,N}$, where $\tau_{0,N}$ is OGM's guarantee and $\tau_{N,N}$ is SPGM's guarantee after $N$ rounds.

While $\tau_{N,N}=\tau_{0,N}$ can occur on adversarial problems, we numerically see $\tau_{N,N}$ is often many orders of magnitude larger. For example, Figure~\ref{fig:representative-experiment} in our numerics shows two sample problems where SPGM's guarantee is approximately $10^5$ and $10^7$ times larger than OGM's after only a few hundred iterations.

\begin{remark}
	Technically, SPGM is not a ``strategy'' for Alice as SPGM does not assign an action to certain game states, namely those that cannot occur as a result of running SPGM on some first-order oracle.
	This is not practically relevant, however, because we are primarily concerned with the design of algorithms and may ignore suboptimal play by Alice.
	It can be shown via backward induction that there is a subgame perfect equilibrium $(A, B)$ so that $A$ extends SPGM in the sense that $A$ plays the same action as SPGM in all cases where SPGM is defined.
\end{remark}

SPGM can be derived constructively through the Performance Estimation Problem (PEP) framework \cite{drori2012PerformanceOF,taylor2017interpolation}. Specifically, SPGM is an optimized version of OGM where the quantities $\set{(f_i,x_i,g_i)}_{i\in[0,n-1]}$, which are treated as \emph{variables} in the PEP framework before the start of the minimization game, become constants at round $n$.
% Naturally, the construction of SPGM, its guarantees, and the adaptive lower bounds follow proof strategies similar to \cite{}.
Surprisingly, the computation required to implement the SPGM amounts to solving a single \emph{convex} optimization problem in each iteration. Thus, the SPGM can be implemented in a numerically stable manner with some storage and computational overhead.
Both the storage and computational overhead can be controlled by considering a \emph{limited-memory version} of SPGM,
analogous to the limited-memory version of the classic BFGS algorithm known as L-BFGS. 
In \cref{sec:lSPGM}, we give a precise description of this limited-memory variant of SPGM and show that it has a storage overhead of $O(dk)$ and a computational overhead of at most $O(k^{3.5} + dk)$ time per iteration, where $k$ is a user-specified parameter.

\subsection{Related work}\label{subsec:relatedWork}
Below, we summarize classical and recent results in the design of optimal gradient methods and first-order methods using a bundle of past first-order information.

\paragraph{Classical Convergence Theory for Gradient Methods.}
Gradient descent, which dates back at least to \cite{cauchy1847methode}, is perhaps the simplest algorithm for smooth convex optimization.
In this setting, gradient descent with stepsize $1/L$, i.e., the algorithm $x_{n} = x_{n-1} - \frac{1}{L}g_{n-1}$ can be shown to achieve suboptimality $f_N - f_\star \leq \frac{L\norm{x_0 - x_\star}^2}{2N}$~\cite{d2021acceleration}.
In a seminal result of~\cite{Nesterov1983}, a method was constructed with the guarantee
$f_N - f_\star \leq \frac{2L\norm{x_0 - x_\star}^2}{N^2}$.
Almost matching lower bounds were given by~\cite{nemirovsky1992information,nemirovskij1983problem} showing that $f_N - f_\star \leq \frac{2L\norm{x_0 - x_\star}^2}{N^2}$ is optimal up to absolute constants.

\paragraph{Recent Optimized Gradient Methods.}
Since the early 2010s, there has been an explosion of work seeking optimal methods in large part due to the Performance Estimation Problem (PEP) framework initiated by \cite{drori2012PerformanceOF,taylor2017interpolation}. PEPs enable one to exactly compute via semidefinite programming the worst-case performance of a given first-order method over structured families of problem instances (e.g., all $L$-smooth convex problems).
Leveraging this exact tool, a significant list of \emph{minimax optimal} or conjectured minimax optimal first-order methods has been discovered.
In the game-theoretic framework of \cref{subsec:game}, minimax optimal corresponds to the best possible payoff for Alice that she can guarantee when playing against an optimal adversary.

The Optimized Gradient Method for smooth convex optimization was first observed numerically in~\cite{drori2012PerformanceOF} and proved analytically in~\cite{Kim2016optimal}. It was proved to be minimax optimal in~\cite{drori2017exact}. Adaptive versions of this method have been designed using linesearching~\cite{Drori2018EfficientFM,TaylorBach2019a} as well as restarting~\cite{Kim2018Restarts} techniques. An optimized first-order method for strongly convex minimization was derived in~\cite{VanScoy2018,Cyrus2018,Park2021Factor}, culminating in the development of the minimax optimal Information-Theoretic Exact Method of~\cite{taylor2022optimal}. Optimized methods able to be applied to various constrained settings or settings with proximal terms have been developed as well~\cite{taylor2017smooth,Kim2018FISTA,Kim2021PPM,Barre2022,jang2024composite}.
The minimax optimal constant stepsize gradient descent scheme for smooth convex or strongly convex optimization was first observed numerically in~\cite{drori2012PerformanceOF} and proved in~\cite{rotaru2024exact,kim2024proof}.
Conjectured minimax optimal long stepsize gradient descent schemes with partial acceleration were first observed numerically in~\cite{gupta2023branch} with upper bounds proved analytically in~\cite{altschuler2023accelerationPartI,altschuler2023accelerationPartII,grimmer2024accelerated,grimmer2024composing,zhang2024acceleratedgradientdescentconcatenation,zhang2024anytime}. 
In conjunction with the development of these optimized performance upper bounds, techniques have been developed for producing exactly matching performance \emph{lower bounds}~\cite{drori2017exact,Drori2021OnTO}.
Our analysis in \cref{sec:lower_bounds} relies heavily on the \emph{zero-chain} constructions of~\cite{Drori2021OnTO}.

These prior optimized methods and matching lower bounds aim to establish minimax optimal performance against the whole class of $L$-smooth convex problems. In contrast, our theory targets a stronger \emph{dynamic} minimax optimality. At every iteration, SPGM dynamically adjusts to additionally attain the minimax optimal performance against the subclass of $L$-smooth convex problems that agree with the history of first-order information revealed so far.

\paragraph{Bundle Methods in Nonsmooth Optimization.}
SPGM's optimized updates require maintaining a memory of all past first-order oracle responses $\{(x_i,f_i,g_i)\}_{i=0}^{n-1}$ received, where $f_i=f(x_i),g_i= \nabla f(x_i)$. Such a collection of first-order queries in the nonsmooth optimization literature with subgradients $g_i\in\partial f(x_i)$ is often referred to as a bundle. ``Bundle methods'' utilizing such a  collection have found strong practical success, first being proposed by~\cite{Lemarechal1975,Wolfe1975} with a large literature of supporting convergence theory~\cite{Kiwiel1983,Hiriart1993,Ruszczynski2006,Warren2010,Du2017,DiazGrimmer2023,Liang2021} giving worst-case performance bounds. Most relevant to our development here is the Kelley cutting plane-Like Method (KLM) developed by~\cite{drori2016Kelley}. Therein, the authors designed updates based on a series of relaxations and reformulations to produce an iterate with an optimized worst-case performance as a function of the bundle $\{(x_i,f_i,g_i)\}_{i=0}^{n-1}$. In doing so, they prove KLM has a dynamic sequence of upper bounds, never worse than the minimax optimal convergence rate.
Forthcoming work establishes that KLM is a subgame perfect method in the setting of $M$-Lipschitz nonsmooth convex optimization.

\paragraph{Gradient Methods with Memory.}
The incorporation of memory into accelerated gradient methods for smooth convex optimization has been accomplished by the level bundle method of Lan~\cite{Lan2015} and into proximal bundle methods by Fersztand and Sun~\cite{fersztand2025}. Both of these works show accelerated worst-case guarantees of $O(1/N^2)$ but guarantee no further dynamic improvements from their memory. Such dynamic improvements were the focus of the recent series of works on gradient methods with memories of~\cite{Nesterov2022memory,Florea2022exact,FloreaNesterov2025OptimalLowerBound}. At each iteration, these gradient methods with memory approximately solve a nonlinear subproblem to dynamically improve the inductive step in an associated estimate sequence-style proof. Further,~\cite{FloreaNesterov2025OptimalLowerBound} constructs dynamic lower bounds from observed gradients, providing an ``optimized'' lower bounding model. The spirit of dynamically improving an algorithm's induction at runtime and dynamically constructing hard instances is shared with SPGM. These works prove their methods with memory achieve at least the optimal worst case rate of $O(1/N^2)$. Establishing whether their gradient methods with memory achieve or nearly achieve subgame perfection is an interesting future direction. These methods have been extended to strongly convex, composite settings and to allow backtracking~\cite{florea2021composite,florea2024adaptivefirstordermethodsenhanced}. Determining whether SPGM can also be extended to such settings is left open as well.

Quasi-Newton methods are also a gradient method with memory. Such algorithms only rely on first-order oracle evaluations, but use a history of gradients to construct an approximate Hessian via BFGS-like updates~\cite{broyden1970,fletcher1970,goldfarb1970,shanno1970} and take approximate Newton steps. Limited memory BFGS methods often provide strong practical performance. However, non-asymptotic guarantees for quasi-Newton methods are limited. Preliminary numerics presented in Section~\ref{sec:numerical} show that SPGM is nearly as performant as L-BFGS while providing strong, subgame perfect, non-asymptotic guarantees.

\paragraph{Uniform/Universal methods.}
A significant research vein in recent years has targeted the design of \emph{uniform/universal} methods~\cite{nesterov2015universal,Lan2015,malitsky2019adaptive,li2025simple}. These are algorithms which may be run without prior knowledge of problem class or associated problem constants. For example, \cite{nesterov2015universal}'s Universal Fast Gradient Method (UFGM) when applied to minimize any convex function with $(\beta,p)$-H\"older continuous gradient is guaranteed to achieve an objective gap convergence guarantee of $O((\beta \|x_0-x_\star\|^{1+p}/\epsilon)^{2/(1+3p)})$. For any $\beta>0, p\in[0,1]$, UFGM's rate is big-O optimal for the associated class of H\"older smooth, convex problems.

Generally one can view universal algorithms as providing optimal guarantees against a range of target problem classes, e.g., $(\beta,p)$-H\"older smooth convex minimization for any $\beta,p$. In contrast, methods like OGM provide minimax optimal guarantees against exactly one class of problems, $L$-smooth convex minimization for fixed $L$. Our notion of subgame perfect methods is a distinct approach: Subgame perfect methods provide optimal guarantees against a dynamic sequence of problem classes, namely $L$-smooth convex functions agreeing with the observed problem history up to iteration $n$ for each $n=0,\dots, N$.

\subsection{Outline}
We provide some background information on PEP and OGM in \cref{sec:preliminaries}. \cref{sec:upper_bounds} introduces the SPGM formally and proves its convergence guarantee (\cref{thm:spgm_performance}).
\cref{sec:lower_bounds} constructs matching worst-case functions agreeing with given history (see \cref{prop:interpolating,prop:zero-chain}).
The resulting dynamic matching lower bounds together with \cref{lem:n_step_guarantee} constitute a proof for \cref{thm:subgamePerfect}.
In \cref{sec:lSPGM}, we describe a limited-memory variant of SPGM and provide its guarantees.
We conclude with preliminary numerical experiments in \cref{sec:numerical} comparing SPGM with gradient descent, OGM, Limited-Memory BFGS~\cite{liu1989limited}, and UFGM.

\section{Preliminaries}
\label{sec:preliminaries}

\subsection{Smooth convex minimization and performance estimation}
\label{subsec:pep_prelims}
This section summarizes background material on the Performance Estimation Problem (PEP) methodology, initiated by~\cite{drori2012PerformanceOF,taylor2017interpolation}, in the setting of unconstrained smooth convex optimization.

Our goal is to find an optimal solution to the unconstrained minimization problem
\begin{equation*}
	\min_{x\in\R^d}f(x),
\end{equation*}
where $f$ is known to be convex and $L$-smooth, and is assumed to have a minimizer. 

Recall, a convex function $f:\R^d\to\R$ is said to be $L$ smooth if it is differentiable and for all $x,y\in\R^d$,
\begin{equation*}
	\norm{\grad f(x)-\grad f(y)}\leq L\norm{x-y}.
\end{equation*}

% By scaling the unknown function $f$, it suffices to consider only the setting where $L=1$. Thus, in the remainder of the paper, we consider only the setting of 

Instead of thinking about the function $f$ itself, it will be useful to think about the collection of first-order data that a first-order method encounters:
$\set{(x_i,f_i,g_i)}_{i\in\cI}$.
Here, $\cI$ is some index set and we use the shorthand $f_i=f(x_i)$ and $g_i=\grad f(x_i)$.
The following lemma gives a characterization of 
when a set of first-order data is \emph{interpolable} by an $L$-smooth convex function. 
\begin{lemma}[{\cite[Corollary 1]{taylor2017interpolation}}]
	\label{lem:interpolation}
	Given a set of first-order data $\set{(x_i,f_i,g_i)}_{i\in\cI}$, there exists an $L$-smooth convex function $f$ satisfying
	\begin{equation*}
		f(x_i )= f_i\qquad\text{and}\qquad \grad f(x_i)= g_i,\qquad\forall i\in\cI
	\end{equation*}
	if and only if the quantity
	\begin{equation*}
		Q_{i,j}\coloneqq f_i - f_j - \ip{g_j,x_i-x_j}- \frac{1}{2L}\norm{g_i-g_j}^2
	\end{equation*}
	is nonnegative for all $i,j\in\cI$.
\end{lemma}

It will be helpful to introduce a slightly different parameterization of the first-order data: for a given $x\in\R^d$, we define 
\begin{equation}
	\label{eq:plus_defs}
	f(x)^+ = f(x) - \frac{1}{2L}\norm{\grad f(x)}^2\qquad\text{and}\qquad
	x^+ \coloneqq x - \frac{1}{L}\grad f(x).
\end{equation}
Intuitively,
$x^+$ is the result of a single step of gradient descent from $x$ and $f(x)^+$ is the ``natural'' upper bound on $f(x^+)$ from $L$-smoothness:
\begin{equation*}
	f(x^+) \leq f(x) + \ip{\grad f(x), x^+ - x} + \frac{L}{2}\norm{x^+ - x}^2 = f(x)^+.
\end{equation*}
For convenience, let $f_i^+ = f(x_i)^+$ and let $x_i^+ = (x_i)^+$.
These quantities will appear prominently in both the OGM and SPGM. Note also that with these definitions, the quantity $Q_{i,j}$ can be rewritten as
\begin{equation*}
	Q_{i,j} = f_i^+ - f_j^+ - \langle g_j, x_i^+ - x_j^+\rangle,
\end{equation*}
which mirrors the first-order inequalities associated to a convex function.
% We will see that the expressions $f_i^+$ and $x_i^+$ appear also prominently in our algorithmic definitions.

\subsection{Optimized Gradient Method (OGM) and Auxiliary Sequences}
\label{subsec:ogm}

Our Subgame Perfect Gradient Method generalizes the existing Optimized Gradient Method (OGM). Below, we introduce OGM and (our interpretation of) an analysis for OGM first given by~\cite{TaylorBach2019a} and presented clearly in~\cite[Theorem 4.4]{d2021acceleration} and~\cite{Park2021Factor}.

OGM generates iterates $x_0,x_1,\dots,x_N$ as well as an auxiliary sequence $z_1,\dots,z_{N+1}$ and a sequence of parameters $\tau_{0,0},\dots,\tau_{0,N}>0$, which roughly measure the rate of convergence of the algorithm in each iteration.
% \todo{I removed the description of OGM and just referenced ALG 1. Is this okay?}
% At each iteration, OGM updates $x_n$ to be a convex combination of $x_{n-1}^+$ and $z_n$ with weights controlled by the $\tau_{0,n}$ sequence, updates $z_{n+1}$ using a gradient at $x_n$, and increments $\tau_{0,n}$ by the following amount\KS{I feel like the important thing we need to state is defining this increment; why do we also want to say in words what OGM is doing here?}
Define the following ``increment'' function
\begin{equation}
	\label{eq:delta}
	\delta_n(\tau) \coloneqq
	\begin{cases}
		1 +\sqrt{1 + 2\tau} &\text{if }n\leq N-1\\
		\frac{1+\sqrt{1+4\tau}}{2} &\text{else}
	\end{cases},
\end{equation}
which will be used by OGM to increment the value of $\tau_{0,n}$.

The OGM algorithm is formally described in \cref{alg:OGM}.
Note that at the time that $z_{n+1}= z_n -\frac{\delta_n(\tau_{0,n-1})}{L} g_n$ is defined, the quantity $g_n$ is still unknown.
What we mean by this definition is that $z_{n+1}$ is a \emph{formal expression} in $g_n$ that becomes a concrete vector in $\R^d$ upon learning $g_n$ at the beginning of iteration $n+1$.

\begin{algorithm}
	\caption{OGM}
	\label{alg:OGM}
	Given $L$-smooth convex function $f$, initial iterate $x_0$, iteration budget $N$
	\begin{itemize}
		\item Define $z_1 = x_0 - \frac{2}{L}g_0$, $\tau_{0,0}=2$. 
		\item For $n = 1,\dots,N$
		\begin{enumerate}
			\item Reveal $g_{n-1} = \grad f(x_{n-1})$
			\item Define
			\begin{align}
				\label{eq:ogm-recurrence}
				\tag{OGM recurrence}
				\tau_{0,n} & \coloneqq \tau_{0,n-1} + \delta_n(\tau_{0,n-1}),\\
				x_n &\coloneqq \frac{\tau_{0,n-1}}{\tau_{0,n}}x_{n-1}^+ + \left(\frac{\delta_n(\tau_{0,n-1})}{\tau_{0,n}}\right) z_n, \nonumber\\
				z_{n+1}&\coloneqq z_n - \frac{\delta_n(\tau_{0,n-1})}{L} g_n.
				\nonumber
			\end{align}
		\end{enumerate}
	\end{itemize}
\end{algorithm}

\begin{remark}
	As stated, the OGM (\cref{alg:OGM}) requires the iteration budget $N$ as part of its input. However, this $N$ is never used in the definition of the updates to $x_n$ or $z_{n+1}$ until the final iteration $n=N$. That is, OGM only needs to know the iteration budget $N$ \emph{a single iteration in advance}. In this sense, OGM is \emph{almost any-time}: we can run OGM without an iteration budget fixed in advance, as long as we allow it to perform one additional iteration after we ask it to terminate. This same property will hold for the subgame perfect gradient method (\cref{alg:SPGM}).

\end{remark}
\begin{remark}
	We choose the notation $\tau_{0,n}$ instead of the more traditional single index notation $\tau_{n}$ to emphasize the fact that $\tau_{0,n}$ is known at the beginning of iteration 0, i.e., it does not depend on the oracle responses.
	By contrast, when we analyze SPGM, we will introduce sequences $\tau_{n,n},\dots,\tau_{n,N}$ that depend on the first $n$ oracle responses $\set{(x_i,f_i,g_i)}_{i=0}^{n-1}$.
\end{remark}

Both our analysis of OGM and our analysis of SPGM will involve the notion of an \emph{auxiliary} vector, which we define for a fixed $L$-smooth convex function $f:\R^d \rightarrow \R$ with minimizer $x_\star \in \R^d$ and an arbitrary $x_0 \in \R^d$.
We say that $z \in \R^d$ is an \emph{auxiliary vector for $x \in \R^d$ with rate $\tau \in \R$} if
\[
f(x) - f_\star + \frac{L}{2\tau}\norm{z - x_\star}^2\leq \frac{L}{2\tau}\norm{x_0 - x_\star}^2.
\]
As a simple consequence, if $x$ has some auxiliary vector with rate $\tau$, then
\[
f(x) - f_\star \leq \frac{L}{2\tau}\norm{x_0 - x_\star}^2.
\]

% \todo{I added a very brief ``intuition'' here. Okay?}
Intuitively, this definition parameterizes (by $\tau$) a tradeoff between $f(x)-f_\star$ and $\norm{z-x_\star}$. For example, if $f(x)- f_\star$ achieves its worst-case upper bound $\frac{L}{2\tau}\norm{x_0-x_\star}^2$, then necessarily $z=x_\star$. Similarly, if $\norm{z-x_\star}$ achieves its worst-case upper bound $\norm{x_0-x_\star}$, then necessarily $f(x)=f_\star$.

We will further define the \emph{pre-rate} of an \emph{auxiliary vector}. We say that  $z \in \R^d$ is an \emph{auxiliary vector for $x \in \R^d$ with pre-rate $\tau \in \R$} if
\[
f(x)^+ - f_\star + \frac{L}{2\tau}\norm{z - x_\star}^2\leq \frac{L}{2\tau}\norm{x_0 - x_\star}^2.
\]
The difference between the definitions of pre-rate and rate is that the quantity $f(x)$ is replaced with $f(x)^+= f(x) - \frac{1}{2L}\norm{\nabla f(x)}^2$ in the definition of the pre-rate.

A full analysis of OGM is captured by the following two lemmas, which we will reuse in our analysis of SPGM in \cref{sec:upper_bounds}.

\begin{lemma}
	\label{lem:OGM_initialization}
	$z_1 = x_0- \frac{2}{L}g_0$ is an auxiliary vector for $x_0$ with pre-rate $2$.
\end{lemma}

\begin{lemma}
	\label{lem:OGM_induction}
	Suppose $z\in\R^d$ is an auxiliary vector for $x\in\R^d$ with pre-rate $\tau\in\R$.
	Let $\delta = 1 + \sqrt{1+2\tau}$ and set
	\begin{align*}
		\tau' &= \tau + \delta,\\
		x' &= \frac{\tau}{\tau'}x^+ + \frac{\delta}{\tau'} z,\\
		z' &= z - \frac{\delta}{L} \nabla f(x').
	\end{align*}
	Then, $z'$ is an auxiliary vector for $x'$ with pre-rate $\tau'$.
	If, in the definition above, $\delta$ is replaced by $\delta = \frac{1+\sqrt{1+4\tau}}{2}$, then $z'$ is an auxiliary vector for $x'$ with rate $\tau'$.
\end{lemma}
These lemmas are implicitly proved in~\cite{TaylorBach2019a}. We include their proofs in \cref{sec:deferred} for completeness.

Combining these two lemmas, we see that for each iteration $n\in[0,N-1]$ of OGM, $z_{n+1}$ is an auxiliary vector for $x_n$ with pre-rate $\tau_{0,n}$ and $z_{N+1}$ is an auxiliary vector for $x_N$ with rate $\tau_{0,N}$. We conclude that the output of OGM satisfies
\begin{equation*}
	f(x_N) - f_\star \leq \frac{L}{2\tau_{0,N}}\norm{x_0-x_\star}^2.
\end{equation*}

\section{Subgame perfect strategies for the Minimization Game}
\label{sec:upper_bounds}
This section introduces the \emph{Subgame Perfect Gradient Method} (SPGM), and gives an upper bound on its final normalized suboptimality.
SPGM is naturally thought of as an augmentation of OGM, which was defined in \cref{subsec:ogm}.
In particular, we will use the notion of an auxiliary sequence, which was used in our explanation of OGM.

In \cref{subsec:SPGM}, we will begin stating the SPGM algorithm and state a bound on the final normalized suboptimality of the algorithm. We will complete the description of SPGM in \cref{subsec:subproblem}. There, we introduce and analyze \eqref{eq:optimize_lambdas}, the subroutine which is the key driver of the improvement in performance from OGM to SPGM.
In \cref{subsec:primal_feasibility}, we will clarify some aspects of \eqref{eq:optimize_lambdas} such as its feasibility and what happens when the subproblem becomes unbounded.
Finally, in \cref{subsec:intermediate}, we will note that the bounds on the convergence rate of the SPGM algorithm will naturally improve as new gradients are revealed, and offer some bounds on the final suboptimality of the algorithm which can be computed at the $n^{th}$ iteration (but not earlier).

\subsection{The Subgame Perfect Gradient Method (SPGM)}
\label{subsec:SPGM}

SPGM is formally stated in \cref{alg:SPGM}. As in OGM, we will maintain two sequences $x_0,\dots,x_N$ and $z_1,\dots,z_{N+1}$.
The last step of each iteration is also identical to OGM, but with an intermediate set of values $\tau_{n-1/2}, x_{n-1/2},$ and $z_{n+1/2}$ replacing $\tau_{n-1,n-1},x_{n-1},$ and $z_n$ respectively. We denote these values with a half increment in each subscript to emphasize these quantities are calculated between the construction of $(\tau_{n-1,n-1},x_{n-1},z_{n})$ and $(\tau_{n,n},x_{n},z_{n+1})$.
These intermediate quantities are defined by \eqref{eq:optimize_lambdas}, which we define in the next subsection.

\begin{algorithm}[H]
	\caption{SPGM}
	\label{alg:SPGM}
	Given $L$-smooth convex function $f$, initial iterate $x_0$, iteration budget $N$
	\begin{itemize}
		\item Define $\tau_{0,0} = 2$ and $z_1 = x_0 - \frac{2}{L}g_0$
		\item For $n = 1,\dots,N$
		\begin{enumerate}
			\item Reveal $(f_{n-1},g_{n-1}) = (f(x_{n-1}),\grad f(x_{n-1}))$ and update first-order information set $\cH\coloneqq \set{(x_i,f_i,g_i)}_{i={0}}^{n-1}$
			\item Define 
			$x_{n-1/2}= x_m$ where $m\in\argmin_{i\in[0,n-1]}f_i^+$ and let $\tau_{n-1/2}$ be the optimal value of \eqref{eq:optimize_lambdas}.
			\item If \eqref{eq:optimize_lambdas} is unbounded, output $x_{n-1/2}^+$. Otherwise, define $z_{n+1/2}$ by \eqref{eq:x_half_def}.
			\item Define
			\begin{align*}
				\tau_{n,n} & \coloneqq \tau_{n-1/2} +\delta_n(\tau_{n-1/2}),\\
				x_n & \coloneqq \frac{\tau_{n-1/2}}{\tau_{n,n}}x_{n-1/2}^+ + \frac{\delta_n(\tau_{n-1/2})}{\tau_{n,n}} z_{n+1/2},\\
				z_{n+1} & \coloneqq z_{n+1/2} - \frac{\delta_n(\tau_{n-1/2})}{L} g_n,
			\end{align*}\label{eq:SPGMDefinitions}
			where $\delta_n(\cdot)$ is defined in \eqref{eq:delta}.
		\end{enumerate}
	\end{itemize}
\end{algorithm}

In our analysis, we will see that $z_{n+1/2}$ is an auxiliary vector for $x_{n-1/2}$ with pre-rate $\tau_{n-1/2}$.
The intuition for introducing an intermediate set of values is that OGM defines the vector $z_{n}$ without seeing the oracle response $(f_n,g_n)$.
Once this oracle response is revealed, it may be possible to find an alternative auxiliary vector inside the span of the seen gradients which has a larger pre-rate. This would ultimately lead to a faster rate of convergence.
The subroutine \eqref{eq:optimize_lambdas} can be viewed as finding an auxiliary vector $z_{n+1/2}$ for $x_{n-1/2}$ which has as large a pre-rate as possible.
Indeed, our analysis implies that $\tau_{n-1/2}$ is the maximum pre-rate that can be guaranteed based on the history $\cH$. This is ultimately what leads to SPGM being subgame perfect.

The analysis of the performance of SPGM will build on the analysis of OGM. In \cref{lem:half_update_pc}, we will formally argue that $z_{n+1/2}$ is an auxiliary vector for $x_{n-1/2}$ with pre-rate $\tau_{n-1/2}$ for each $n \in [1,N]$. In \cref{lem:strict_feasibility_ogm} and \cref{lem:unbounded}, we will address feasibility and unboundedness of SPGM's subproblem. With these results in hand, we will then prove convergence guarantees on SPGM in \cref{thm:spgm_performance}.

\subsection{The SPGM Subproblem}
\label{subsec:subproblem}

We now explain the main algorithmic innovation in the SPGM iteration that produces the intermediate quantities $\tau_{n-1/2}$, $x_{n-1/2}$ and $z_{n+1/2}$.

% \KS{I would prefer to start with a `naive' approach, and then explain why it doesn't work, and how we correct it.} % sounds good
Intuitively, we would like to directly maximize the value of $\tau$ subject to the existence of some $z \in \R^d$ so that $z$ is an auxiliary vector to $x_{n-1/2}$ with pre-rate $\tau$. That is, we want to maximize $\tau$ with the constraint that there exists $z \in \R^d$ so that 
\[
f^+_{n-1/2} - f_\star + \frac{L}{2\tau}\norm{z - x_\star}^2\leq \frac{L}{2\tau}\norm{x_0 - x_\star}^2.
\]
However, this inequality involves the unknown quantities $f_\star$ and $x_\star$, and thus, we cannot directly optimize $\tau$. We must instead infer bounds on the values of $f_\star$ and $x_\star$ from the observed first-order history.

We will make use of the following two types of bounds\footnote{Nonnegativity of $H_i$ comes from the inductive assumption that $z_{i+1}$ is an auxiliary vector for $x_i$ with pre-rate $\tau_{i,i}$. Nonnegativity of $Q_{\star,i}$ comes from \cref{lem:interpolation}.}:
\begin{align*}
	H_i&\coloneqq \tau_{i,i}\left(f_\star - f_i^+\right) + \frac{L}{2}\norm{x_0-x_\star}^2 - \frac{L}{2}\norm{z_{i+1}- x_\star}^2\geq 0\qquad\text{for }i\in[0,n-1]\\
	Q_{\star,i} &= f_\star - f_i^+ - \ip{g_i, x_\star - x_i^+} \geq 0 \qquad\text{for }i\in[0,n-1].
\end{align*}
We will ultimately maximize the value $\tau$ subject to the existence of $z \in \R^d$ \emph{and} a conic combination of the $H_i$'s and $Q_{\star, i}$'s that yield an inequality implying that $z$ is an auxiliary vector for $x_{n-1/2}$ with pre-rate $\tau$.

We introduce notation to compactly express these conic combinations:
let $Z\in\R^{d\times n}$ have columns indexed by $[0,n-1]$ with $i$th column $z_{i+1}-x_0$ and let $G\in\R^{d\times n}$ have columns indexed by $[0,n-1]$ with $i$th column $\frac{1}{L}g_i$.

That is, 
\begin{equation}
	Z = \begin{pmatrix}
		\vert & \vert & & \vert\\
		z_1-x_0 & z_2 - x_0 & \dots & z_{n}-x_0\\
		\vert & \vert & & \vert
	\end{pmatrix} \quad\text{and}\quad
	G = \frac{1}{L} \begin{pmatrix}
		\vert & \vert & & \vert\\
		g_0 & g_1 & \dots & g_{n-1}\\
		\vert & \vert & & \vert
	\end{pmatrix}.
\end{equation} 
Define the vectors $\btau,\bh,\bq\in\R^n$ indexed by $[0,n-1]$:
\begin{align*}
	\btau_i &\coloneqq \tau_{i,i}\\
	\bh_i &\coloneqq \tau_{i,i} f^+_i - \frac{L}{2}\norm{x_0}^2 + \frac{L}{2}\norm{z_{i+1}}^2\\
	\bq_i &\coloneqq f_i^+ -\ip{g_i,x_i^+}.
\end{align*}

The above notation is defined so that for any $\mu, \lambda_\star \in \R^n$,
\begin{align*}
	\sum_{i=0}^{n-1} \mu_i H_i + \sum_{i=0}^{n-1}\lambda_{\star,i}Q_{\star,i} &= \left(\ip{\btau,\mu}+\ip{\mb 1, \lambda_\star}\right)f_\star 
	+ L\ip{Z\mu -  G\lambda_\star,x_\star} - \ip{\bh, \mu} - \ip{\bq,\lambda_\star}.
\end{align*}

We may now state \eqref{eq:optimize_lambdas}:
\begin{equation}
	\label{eq:optimize_lambdas}
	\sup_{\mu,\lambda_\star\in\R^n}\set{ \tau :\, \begin{array}{l}
			\frac{L}{2}\left(\norm{\tilde{z}}^2 - \norm{x_0}^2\right)\leq 
			\ip{\mu,\bh-f^+_{n-1/2}\btau} + \ip{\lambda_\star, \bq - f^+_{n-1/2}\mb 1}\\
			\tilde{z} \ =\  x_0+ Z\mu - G\lambda_\star\\
			\tau = \ip{\btau,\mu} + \ip{\bo,\lambda_\star} \\
			\mu,\lambda_\star\geq 0
	\end{array}}
	\tag{SPGM-Subproblem}
\end{equation}
where $f_{n-1/2}^+= f_m^+$ with $m\in\argmin_{i\in[0,n-1]}f_i^+$ in agreement with our choice of  $x_{n-1/2}= x_m$ in SPGM.

Let $\tau_{n-1/2}$ denote the optimal value of \eqref{eq:optimize_lambdas}, possibly infinite.
Finally, if \eqref{eq:optimize_lambdas} has a bounded optimal solution, we denote the optimizing value of $\tilde{z}$ by
\begin{equation}
	\label{eq:x_half_def}
	% x_{n-1/2} = x_m\\
	z_{n+1/2} = x_0 + Z\mu - G\lambda_\star.
\end{equation}
In the following lemma, we establish that any feasible solution to this subproblem provides a valid auxiliary vector $\tilde{z}$. This result is the core step towards establishing our convergence guarantees on SPGM in Theorem~\ref{thm:spgm_performance}.

\begin{lemma}
	\label{lem:half_update_pc}
	Suppose that $H_i\geq 0$ for $i\in[0,n-1]$, $(\mu, \lambda_\star)$ are feasible for \eqref{eq:optimize_lambdas} with objective value $\tau$ and let $\tilde{z} = x_0 + Z\mu - G\lambda_\star$.
	Explicitly, we are supposing that $(\mu, \lambda_\star)$ are nonnegative and satisfy 
	\[
	\frac{L}{2}\left(\norm{\tilde{z}}^2 - \norm{x_0}^2\right)\leq 
	\ip{\mu,\bh-f^+_{n-1/2}\btau} + \ip{\lambda_\star, \bq - f^+_{n-1/2}\mb 1},
	\]
	and that $\tau = \ip{\btau,\mu} + \ip{\bo,\lambda_\star}$.
	
	Then, $\tilde{z}$ is an auxiliary vector for $x_{n-1/2}$ with pre-rate $\tau$.
\end{lemma}
\begin{proof}
	Let $(\mu, \lambda_\star)$ be feasible for \eqref{eq:optimize_lambdas}
	and let $\tilde{z} = x_0 + Z\mu - G\lambda_\star$. Fix $\tau = \ip{\btau,\mu} + \ip{\bo,\lambda_\star}$. We want to show that
	\[
	f_{n-1/2}^+ - f_\star + \frac{L}{2\tau}\norm{\tilde{z} - x_\star}^2\leq \frac{L}{2\tau}\norm{x_0 - x_\star}^2.
	\]
	
	This is equivalent to showing that the following quantity is nonnegative:
	\[
	H_{n-1/2}\coloneqq\tau \left(f_\star - f_{n-1/2}^+ \right)+
	\frac{L}{2}\norm{x_0 - x_\star}^2 - \frac{L}{2}\norm{\tilde{z} - x_\star}^2.
	\]
	
	We claim that 
	\begin{equation*}
		H_{n-1/2} \geq \sum_{i=0}^{n-1}\mu_i H_i + \sum_{i=0}^{n-1}\lambda_{\star,i}Q_{\star,i}.
	\end{equation*}
	This will imply that $H_{n-1/2} \ge 0$ since each $H_i$ is nonnegative by induction, $Q_{\star, i}$ is nonnegative by \cref{lem:interpolation}, and each $\mu_i, \lambda_{\star,i}$ is nonnegative by feasibility in \eqref{eq:optimize_lambdas}.
	
	We expand the right-hand side as follows:
	\begin{align*}
		&\sum_{i=0}^{n-1}\mu_i H_i + \sum_{i=0}^{n-1}\lambda_{\star,i}Q_{\star,i}\\
		&\qquad=(\langle \btau, \mu \rangle + \langle \bo, \lambda_{\star} \rangle) f_\star +L \langle Z\mu -G\lambda_\star, x_\star\rangle - \langle \bh, \mu\rangle -\langle \bq, \lambda_\star \rangle&& \\
		&\qquad=\tau (f_\star - f_{n-1/2}^+)  +L \langle \tilde{z} - x_0, x_\star\rangle - \langle \bh - f_{n-1/2}^+\btau, \mu\rangle -\langle \bq - f_{n-1/2}^+\bo, \lambda_\star \rangle&&\\
		&\qquad\leq \tau (f_\star - f_{n-1/2}^+)  +L \langle \tilde{z} - x_0, x_\star\rangle - \frac{L}{2}\left(\|\tilde{z}\|^2 - \|x_0\|^2\right)&&\\
		&\qquad=\tau\left(f_\star - f_{n-1/2}^+\right) + \frac{L}{2}\norm{x_0-x_\star}^2 - \frac{L}{2}\norm{\tilde{z}-x_\star}^2&&\\
		&\qquad=H_{n-1/2}.
	\end{align*}
	Here, the third line substitutes the definition
	$\tau = \ip{\btau, \mu} + \ip{\mb 1,\lambda_\star}$ and $\tilde{z} = x_0 + Z\mu - G\lambda_\star$, and the fourth line uses $\langle \bh - f_{n-1/2}^+\btau, \mu\rangle + \langle \bq - f_{n-1/2}^+\bo, \lambda_\star \rangle \geq \frac{L}{2}(\|\tilde{z}\|^2 - \|x_0\|^2)$, which holds at any feasible solution $(\mu,\lambda_\star)$ to \eqref{eq:optimize_lambdas}.
\end{proof}

\subsection{Feasibility and Potential Unboundedness of the Subproblem}
\label{subsec:primal_feasibility}

We now clarify various aspects of SPGM, such as the feasibility of \eqref{eq:optimize_lambdas} and consequences of early termination in step 3 of SPGM
\begin{lemma}
	\label{lem:strict_feasibility_ogm}
	The problem \eqref{eq:optimize_lambdas} is feasible and its optimal value is at least $\tau_{n-1,n-1}$.
	Strong duality holds between \eqref{eq:optimize_lambdas} and its dual.
\end{lemma}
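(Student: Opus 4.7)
The plan is to establish the three claims of the lemma—feasibility, the lower bound $\phi_n \geq \tau_{n-1}$, and strong duality under the hypothesis $z_{i+1}\neq x_0$—by directly exhibiting appropriate feasible points of \eqref{eq:optimize_lambdas}. The first two claims follow from a single construction, while the third requires a separate perturbation argument to verify Slater's condition.

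For feasibility and the lower bound, I will consider the ``OGM-style'' candidate $\mu = \mb{e}_{n-1}$ (the standard basis vector at index $n-1$) and $\lambda_\star = \mb{0}$, whose objective value is exactly $\tau_{n-1}$. To verify its feasibility, I expand both sides of the SOC constraint. Since $Z\mb{e}_{n-1} = z_n - x_0$, the left-hand side becomes $\frac{L}{2}\norm{z_n - x_0}^2$. For the right-hand side, substituting $h_{n-1} = \tau_{n-1}v_{n-1} - \frac{L}{2}\norm{x_0}^2 + \frac{L}{2}\norm{z_n}^2$ and completing the square yields
\[
(h - v_m \tau - L Z^\intercal x_0)_{n-1} = \tau_{n-1}(v_{n-1} - v_m) + \frac{L}{2}\norm{z_n - x_0}^2.
\]
The SOC constraint therefore reduces to the scalar inequality $\tau_{n-1}(v_{n-1} - v_m) \geq 0$, which holds by the natural interpretation of $v_m$ as a reference value satisfying $v_m \leq v_{n-1}$ (e.g.\ the minimum of $\{v_i\}_{i\in[n-k,n-1]}$, which is implicit in the construction of the subclass $\cF_\cH$).

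For strong duality, I will invoke Slater's condition by producing a strictly feasible point. Consider the two-parameter family $(\mu, \lambda_\star) = (t \mb{e}_{n-1} + \delta\mb{1},\, \delta\mb{1})$ with fixed $t \in (0, 1)$ and $\delta > 0$ small. The positivity constraints $\mu > 0$ and $\lambda_\star > 0$ hold strictly as soon as $\delta > 0$. Setting $\delta = 0$ first, a direct calculation using the identity above gives an SOC slack of
\[
\frac{Lt(1-t)}{2}\norm{z_n - x_0}^2 + t\tau_{n-1}(v_{n-1} - v_m),
\]
which is \emph{strictly} positive by the hypothesis $z_n \neq x_0$ (taking $i = n-1$). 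By continuity, this strict SOC inequality persists for all sufficiently small $\delta > 0$, yielding a point in the relative interior of the feasible set and hence Slater's condition.

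The main obstacle is purely algebraic: establishing the identity $(h - v_m\tau - LZ^\intercal x_0)_i = \tau_i(v_i - v_m) + \frac{L}{2}\norm{z_{i+1} - x_0}^2$ via careful expansion of $h_i$ and completion of the square. Once this identity is in hand, both the feasibility verification and the Slater construction become elementary. A convenient feature of the Slater argument is that the LHS of the SOC constraint scales \emph{quadratically} in $(\mu,\lambda_\star)$ while the RHS scales \emph{linearly}, so small positive perturbations of the ``OGM-style'' point automatically remain strictly feasible for the SOC constraint.
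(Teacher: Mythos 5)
Your proof is correct and follows essentially the same route as the paper: the paper uses the one-parameter family $\mu=\alpha e_{n-1}$, $\lambda_\star=0$ with $\alpha\in[0,1]$, obtaining feasibility and objective value $\tau_{n-1}$ at $\alpha=1$ (via the same algebraic identity you derive) and strict slack in the quadratic constraint for $\alpha\in(0,1)$ when $z_n\neq x_0$, which it then cites as essential strict feasibility in the sense of Ben-Tal--Nemirovski to conclude strong duality. Your only deviation is the extra $\delta\mathbf{1}$ perturbation producing a fully strictly feasible Slater point, a valid and slightly stronger way to reach the same conclusion.
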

\begin{proof}
	Let $e_i$ denote the basic vector in the $(i+1)$-th coordinate direction. Consider the solution $\mu = \alpha e_{n-1}$ and $\lambda_{\star} = 0$ parameterized by $\alpha \geq 0$,
	and set
	\[
	\tilde{z} = \alpha (z_{n}-x_0) + x_0.
	\]
	Below we check the feasibility of $(\alpha e_{n-1}, 0)$ in \eqref{eq:optimize_lambdas}. Note that
	\[
	\ip{\mu,\bh-f^+_{n-1/2}\btau} + \ip{\lambda_\star, \bq - f^+_{n-1/2}\mb 1} = \alpha \left(\tau_{n-1,n-1} \left(f^+_{n-1} - f^+_{n-1/2}\right)- \frac{L}{2}\norm{x_0}^2 + \frac{L}{2}\norm{z_{n}}^2\right),
	\]
	so $(\alpha e_{n-1}, 0)$ is feasible if and only if the following is nonnegative:
	\begin{align*}
		&\ip{\mu,\bh-f^+_{n-1/2}\btau} + \ip{\lambda_\star, \bq - f^+_{n-1/2}\mb 1} - \frac{L}{2}\left(\norm{\tilde{z}}^2 - \norm{x_0}^2\right)\\
		&\qquad = 
		\alpha \left(\tau_{n-1,n-1} \left(f^+_{n-1} - f^+_{n-1/2}\right)- \frac{L}{2}\norm{x_0}^2 + \frac{L}{2}\norm{z_{n}}^2\right)\\
		&\qquad\qquad - \frac{L}{2}\left(
		\alpha^2\norm{z_n -x_0}^2 + 2\alpha\ip{z_n - x_0, x_0}\right)\\
		&\qquad = \alpha\tau_{n-1,n-1} \left(f^+_{n-1} - f^+_{n-1/2}\right)+ \frac{L(\alpha-\alpha^2)}{2}\norm{x_0-z_{n}}^2.
	\end{align*}
	By assumption, $f_{n-1}^+ \geq f_{n-1/2}^+$. Thus by substituting $\alpha=1$, we have that $(\mu,\lambda_\star)=(e_{n-1},0)$ is feasible in \eqref{eq:optimize_lambdas} with objective value $\tau_{n-1,n-1}$.
	
	By convention, if \eqref{eq:optimize_lambdas} is unbounded, then strong duality holds. We assume \eqref{eq:optimize_lambdas} is bounded. In this case, we claim $z_n\neq x_0$. Indeed, if $z_n=x_0$, then $(\alpha e_{n-1},0)$ is feasible for all $\alpha\to\infty$ with an objective value $\alpha \tau_{n-1,n-1} \to\infty$, a contradiction.
	
	Now, assuming that $z_n\neq x_0$, we see that
	the nonlinear constraint
	\begin{equation*}
		\alpha\tau_{n-1,n-1} \left(f^+_{n-1} - f^+_{n-1/2}\right)+ \frac{L(\alpha-\alpha^2)}{2}\norm{x_0-z_{n}}^2 \geq 0
	\end{equation*}
	is strictly satisfied for all $\alpha\in(0,1)$. Thus, \eqref{eq:optimize_lambdas} is essentially strictly feasible in the sense of \cite[Definition 1.4.2]{ben2001lectures}. This guarantees strong duality holds for \eqref{eq:optimize_lambdas}.
\end{proof}

Next, we show that if \eqref{eq:optimize_lambdas} is unbounded, SPGM successfully finds a \emph{minimizer} of $f$.
\begin{lemma}
	\label{lem:unbounded}
	If \eqref{eq:optimize_lambdas} is unbounded, then
	\begin{equation*}
		x_{n-1/2}^+ \in\argmin_x f(x).
	\end{equation*}
\end{lemma}
\begin{proof}
	The descent lemma implies that
	\begin{equation*}
		f(x_{n-1/2}^+) \leq f_{n-1/2}^+,
	\end{equation*}
	and so it suffices to show that $f_{n-1/2}^+ \le f_\star$.
	
	\cref{lem:half_update_pc} implies that for any feasible $(\mu, \lambda)$, if $\tau = \ip{\btau,\mu} + \ip{\bo,\lambda_\star}$, then
	\[
	f_{n-1/2}^+ - f_\star \le \frac{L}{2\tau}\norm{x_0 - x_\star}^2.
	\]
	Since \eqref{eq:optimize_lambdas} is unbounded, we have that there exist feasible solutions with $\tau$ arbitrarily large. We conclude that $f_{n-1/2}^+ \le f_\star$.
\end{proof}
\subsection{Bounds on the Convergence Rate of SPGM}
\label{subsec:intermediate}
Now we are ready to present our method's formal convergence guarantees.
\begin{theorem}
	\label{thm:spgm_performance}
	Letting $x_N$ be the output of SPGM, we have that 
	\[
	f(x_N) - f(x_\star) \le \frac{L}{2\tau_{N,N}} \|x_0 - x_\star\|^2.
	\]
\end{theorem}
\begin{proof}
	We proceed by induction on $n$.
	As for the OGM, $z_1$ is an auxiliary vector for $x_0$ with pre-rate $\tau_{0,0}=2$ (see \cref{lem:OGM_initialization}). Consider any $n\in [1,N]$. By Lemma~\ref{lem:unbounded}, if the subproblem is unbounded, then SPGM reports an exact minimizer. Hence, going forward, we can assume each subproblem is bounded. Then from Lemma~\ref{lem:strict_feasibility_ogm}, we have existence of optimal $(\lambda_\star,\mu)$ since $\tau = \ip{\btau,\mu} + \ip{\bo,\lambda_\star}$ and $\btau_i >0$ for each $i$. Further, the resulting $z_{n+1/2}$ is an auxiliary vector for $x_{n-1/2}$ with pre-rate $\tau_{n-1/2}$ by \cref{lem:half_update_pc}.
	It then follows from \cref{lem:OGM_induction} that $z_{n+1}$ is an auxiliary vector for $x_n$ with pre-rate $\tau_{n,n}$, if $n<N$, and with rate $\tau_{N,N}$ if $n=N$.
	
	We conclude that
	\begin{equation*}
		f_N - f_\star \leq \frac{L}{2\tau_{N,N}}\norm{x_0-x_\star}^2.\qedhere
	\end{equation*}
\end{proof}
We will note that the value of $\tau_{N,N}$ cannot be computed without knowing all $N$ oracle responses.
The sequence of pre-rates and rates $\tau_{0,0},\dots,\tau_{N,N}$ in SPGM are computed dynamically. Specifically, $\tau_{i,i}$ depends on the first $i$ oracle responses. In order to formalize the fact that \cref{alg:SPGM} is subgame perfect, we will also need to give lower bounds on $\tau_{i,i}$ that can be computed after seeing $n\leq i$ oracle responses.
Define
the double-indexed quantity $\tau_{n,i}$ for $n\leq i$:
\begin{equation}
	\label{eq:tau_ni}
	\tau_{n,i} = \begin{cases}
		\tau_{n,n} &\text{if }i = n\\
		\tau_{n,i-1}+ \delta_i(\tau_{n,i-1})&\text{if }i >n.
	\end{cases}
\end{equation}
As an example, since $\tau_{0,0} = 2$, we have that the $\tau_{0,i}$ defined in this way coincides with the OGM sequence, $\tau_{0,0},\dots,\tau_{0,N}$. The following lemma states a monotonicity property on the $\tau_{i,j}$ and uses it to prove the first half of \cref{thm:subgamePerfect}.
\begin{lemma}
	\label{lem:n_step_guarantee}
	For any $n\in[0,N]$, it holds
	\begin{equation*}
		\tau_{0,n} \leq \tau_{1,n}\leq \dots \leq \tau_{n-1,n}\leq \tau_{n,n}.
	\end{equation*}
	In particular, if $x_N$ is the output of SPGM and $n \in [0, N]$, we have that 
	\[
	f(x_N) - f_\star \le \frac{L}{2\tau_{n,N}} \|x_0 - x_\star\|^2 \le  \frac{L}{2\tau_{0,N}} \|x_0 - x_\star\|^2.
	\]
\end{lemma}
\begin{proof}
	First, note that for all $n\in[1,N]$, the function $\delta_n(\tau)$ is monotonic increasing in $\tau$. We have that $\tau_{n-1/2}\geq \tau_{n-1,n-1}$ by \cref{lem:strict_feasibility_ogm}. Thus,
	\begin{equation*}
		\tau_{n,n} = \tau_{n-1/2} + \delta_n(\tau_{n-1/2})  \geq \tau_{n-1,n-1} + \delta_n(\tau_{n-1,n-1}) = \tau_{n-1,n}.
	\end{equation*}
	We can also chain this inequality to get,
	\begin{equation*}
		\tau_{n,n} \geq \tau_{n-1,n-1} + \delta_n(\tau_{n-1,n-1})\geq \tau_{n-2,n-1} + \delta_n(\tau_{n-2,n-1}) = \tau_{n-2,n}.
	\end{equation*}
	Repeating this argument proves the first claim.
	
	The second claim follows immediately from \cref{thm:spgm_performance} and $\tau_{N,N} \ge \tau_{n,N}$.
\end{proof}

\section{Dynamic Lower Bounds} \label{sec:lower_bounds} This section proves the optimality of SPGM in the sense of \cref{thm:subgamePerfect} assuming that $d\geq N+2$. We will assume we are given the history $\cH = \set{(x_i,f_i,g_i)}_{i=0}^{n-1}$ and the quantities $\set{(\tau_{i,i}, z_{i+1})}_{i=0}^{n-1}$ produced by SPGM. We deduce that the function $f$ belongs to the class
\begin{equation*}
	\cF^\mathcal{H} \coloneqq \set{f\colon\R^d\to\R:\, \begin{array}{l}
			f \text{ is $L$-smooth and convex}\\
			f \text{ has a minimizer $x_\star$}\\
			f(x_i)= f_i,\, \grad f(x_i) = g_i \quad\forall  i \in[0,n-1]
	\end{array}}.
\end{equation*}

% In this section, we
From \cref{lem:n_step_guarantee}, we know that SPGM produces $x_n,\dots,x_N$ guaranteeing
\begin{equation}
	\label{eq:spgm_N_guarantee}
	f(x_N) - f(x_\star) \leq \frac{L}{2\tau_{n,N}}\norm{x_0 - x_\star}^2
\end{equation}
on any instance $f\in\cF^\mathcal{H}$.
We will prove the optimality of this bound by constructing a hard function $f_\textup{hard}\in\cF^\mathcal{H}$ so that
\begin{equation}
	\label{eq:f_hard_property}
	f_\textup{hard}(x_N) - f_\textup{hard}(x_\star) \geq \frac{L}{2\tau_{n,N}}\norm{x_0-x_\star}^2
\end{equation}
for any $x_n,\dots,x_N$ satisfying the gradient-span condition \eqref{eq:gradient_span_condition}.

\cref{subsec:overview_lower_bound} provides guiding intuition for our construction and proof strategy.
\cref{subsec:lower_bound_construction} constructs the hard function $f_\textup{hard}$.
\cref{subsec:lower_bound_proofs} then proves that (i) $f_\textup{hard}\in\cF^\mathcal{H}$ (i.e., $f_\textup{hard}$ is a valid strategy for Bob in the remaining subgame) and (ii) that no sequence of $N-n+1$ additional steps satisfying~\eqref{eq:gradient_span_condition} can produce an objective gap less than $\frac{L\|x_0-x_\star\|^2}{2\tau_{n,N}}$. Both of these properties correspond to checking certain inequalities established by~\cite{taylor2017interpolation,Drori2021OnTO}.    

\subsection{Overview of our construction}
\label{subsec:overview_lower_bound}
The construction of $f_\textup{hard}$ is motivated by the following intuition:
Let us suppose that a function $f_\textup{hard}$ satisfying \eqref{eq:f_hard_property} exists.
Clearly, this function will cause the inequality \eqref{eq:spgm_N_guarantee} to hold at equality.
On the other hand, the proof of \eqref{eq:spgm_N_guarantee} boils down to showing that \eqref{eq:spgm_N_guarantee} (after rearranging) can be written as a conic combination of some nonnegative expressions. Thus, in order for it to hold at equality, $f_\textup{hard}$ must set each of these nonnegative expressions identically to zero.
This places numerous constraints on the values of $f_\textup{hard}(x_i)$ and $\grad f_\textup{hard}(x_i)$.

This brings us to the construction: we will define $f_\textup{hard}$ to be a convex function interpolating some set of triples $\set{(x_i,f_i,g_i)}_{i\in\cI}$ where $\cI=[0,N]\cup\set{\star}$. By ensuring that $ \cH=\set{(x_i,f_i,g_i)}_{i\in[0,n]}$ is contained in this set of triples, we will ensure that $f_\textup{hard}\in\cF^\mathcal{H}$. We will then choose $\set{(x_i,f_i,g_i)}_{i\in[n+1, N]\cup\{\star\}}$ in such a way that $f_\textup{hard}$ satisfies the properties that $Q_{n-1/2,n},Q_{n,n+1},\dots,Q_{N-1,N}$, $Q_{\star,n},\dots,Q_{\star,N}$, and $\|z_{N+1}-x_\star\|^2$ are all zero.
Here, we have defined $Q_{n-1/2, j} = Q_{m, j}$, where $m = \argmin_{i \in [n-1]} f_i^+$.
It will turn out that $f_\textup{hard}$ is a hard function not only for SPGM but for any first-order method on the subgame.

\subsection{Construction}
\label{subsec:lower_bound_construction}

We will define the following hard function:
\begin{equation}\label{eq:worst-case-function}
	f_\textup{hard}(y) = \max_{\alpha\in\Delta_\mathcal{I}} \left\{\sum_{i\in \mathcal{I}} \alpha_i\left(f^+_i + \langle g_i, y-x_i^+\rangle \right) - \frac{1}{2L}\norm{\sum_{i\in \mathcal{I}} \alpha_i g_i}^2  \right\},
\end{equation}
where $\Delta_\mathcal{I}$ is the simplex in $\R^{|\cI|}$ and $(x_i,f_i,g_i)$ for $i\in[n,N]\cup\set{\star}$ will be specified in this subsection. $f_\textup{hard}$ is a modification of Drori and Taylor's hard function defined in \cite[Eq.~(1)]{Drori2021OnTO}, and is designed so that it interpolates the first order data $\{(x_i, f_i, g_i)\}_{i=0}^N$.

Before setting the values of $(x_i,f_i,g_i)$ for $i\in[n,N]\cup\set{\star}$, we must first consider the dual to the optimization problem \eqref{eq:optimize_lambdas}. 
Intuitively, an optimal solution to the dual program to \eqref{eq:optimize_lambdas} shows that $z_{n+1}$ is an auxiliary vector for $x_{n}$ with the largest certifiable pre-rate. Hence, it makes sense that our lower bound construction will be related to the optimal solution to this dual program.

The following lemma formulates the dual of \eqref{eq:optimize_lambdas} and interprets complementary slackness between \eqref{eq:optimize_lambdas} and its dual. Its proof is standard and is deferred to \cref{sec:deferred}.
\begin{lemma}
	\label{lem:dual}
	The dual program to \eqref{eq:optimize_lambdas} is
	\begin{align}
		\label{eq:optimize_lambdas_dual}
		\inf_{\xi\in\R,z\in\R^d}\set{
			\frac{L}{2\xi}\norm{x_0 - z}^2:\, \begin{array}{l}
				\bh  - LZ^\intercal z \leq  (f_{n-1/2}^+ - \xi) \btau\\
				\bq + LG^\intercal z \leq (f_{n-1/2}^+ - \xi)\mb 1\\
				\xi>0
		\end{array}}
	\end{align}
	Here, the $\inf$ can be replaced by a $\min$ as long as \eqref{eq:optimize_lambdas} is bounded. Strong duality holds between \eqref{eq:optimize_lambdas} and \eqref{eq:optimize_lambdas_dual}. If $(\mu,\lambda_\star)$ and $(\xi^*,z^*)$ are optimal solutions to \eqref{eq:optimize_lambdas} and \eqref{eq:optimize_lambdas_dual} respectively, then 
	\begin{gather*}
		x_0 + Z\mu - G\lambda_\star = z^*.
	\end{gather*}
\end{lemma}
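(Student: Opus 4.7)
The plan is to derive \eqref{eq:optimize_lambdas_dual} via a standard Lagrangian duality calculation, after first placing \eqref{eq:optimize_lambdas} into a form from which the stated dual emerges transparently. Using the identity $L\ip{Z^\intercal x_0, \mu} - L\ip{G^\intercal x_0, \lambda_\star} = L\ip{x_0, Z\mu - G\lambda_\star}$ and completing the square in the variable $w := Z\mu - G\lambda_\star$, the quadratic constraint in \eqref{eq:optimize_lambdas} is equivalent to
\begin{align*}
    \frac{L}{2}\norm{z'}^2 \leq \frac{L}{2}\norm{x_0}^2 + \ip{\mu, h - v_m\tau} + \ip{\lambda_\star, q - v_m 1},
\end{align*}
where $z' := x_0 + Z\mu - G\lambda_\star$. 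Thus \eqref{eq:optimize_lambdas} is equivalent to maximizing $\ip{\tau,\mu} + \ip{1,\lambda_\star}$ subject to this inequality, the defining equation for $z'$, and $\mu, \lambda_\star \geq 0$.

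Next, I would form the Lagrangian with multiplier $\xi \geq 0$ on the quadratic inequality and $2y$ on the linear equality defining $z'$. Grouping terms, the Lagrangian decomposes into a linear form in $\mu$ with coefficient $\tau + \xi(h - v_m\tau) + 2Z^\intercal y$, a linear form in $\lambda_\star$ with coefficient $1 + \xi(q - v_m 1) - 2G^\intercal y$, a concave quadratic $-\frac{\xi L}{2}\norm{z'}^2 - 2\ip{y,z'}$ in $z'$, and the constant $\frac{\xi L}{2}\norm{x_0}^2 + 2\ip{x_0, y}$. Maximizing over $\mu, \lambda_\star \geq 0$ forces nonpositivity of the respective linear coefficients, which rearranges into the two inequality constraints in \eqref{eq:optimize_lambdas_dual}; maximizing over $z' \in \R^d$ yields the extra objective term $\frac{2\norm{y}^2}{\xi L}$ together with the stationarity condition $z' = -2y/(\xi L)$, precisely the primal-dual relation claimed in the lemma. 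Any dual-feasible $(\xi, y)$ with $\xi = 0$ would produce infinite objective when $y \neq 0$ and be infeasible when $y = 0$ (as $\tau$ and $1$ are entrywise positive), so the $\inf$ can be taken over $\xi > 0$.

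For strong duality and dual attainment, I would invoke \cref{lem:strict_feasibility_ogm}, which establishes essential strict feasibility of \eqref{eq:optimize_lambdas} under the standing assumption that $z_{i+1}\neq x_0$ for all $i\in[n-k, n-1]$. By a standard conic-duality theorem (e.g., \cite[Theorem 1.4.2]{ben2001lectures}), this together with primal boundedness yields both strong duality between \eqref{eq:optimize_lambdas} and \eqref{eq:optimize_lambdas_dual} and attainment of the dual infimum. The primal-dual identity $-2y/(\xi L) = x_0 + Z\mu - G\lambda_\star$ is then read off from Lagrangian stationarity at any primal-dual optimal pair. The only subtle point in the plan is the careful treatment of $\xi = 0$; everything else is essentially bookkeeping.
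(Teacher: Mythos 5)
Your derivation is correct and yields exactly the dual \eqref{eq:optimize_lambdas_dual}, but it proceeds by a genuinely different mechanism than the paper. You complete the square to introduce $z' = x_0 + Z\mu - G\lambda_\star$ as an explicit variable and run ordinary Lagrangian duality: multiplier $\xi\ge 0$ on the (rewritten) quadratic inequality, multiplier $2y$ on the linear equality, maximize over $\mu,\lambda_\star\ge 0$ to get the two sign constraints and over $z'$ to produce the term $\frac{2\norm{y}^2}{\xi L}$, with the $\xi=0$ case correctly excluded. The paper instead lifts the quadratic constraint to a $ (d+1)\times(d+1)$ PSD (Schur complement) constraint, takes the semidefinite dual with a matrix variable parameterized as $\begin{pmatrix}\xi & (\frac{L\xi}{2}x_0+y)^\intercal\\ \frac{L\xi}{2}x_0+y & \Xi\end{pmatrix}$, partially minimizes over $\Xi$, and obtains the relation $-2y/(\xi L)=z'$ by expanding the complementary-slackness trace inner product into the perfect square $\frac{\xi L}{2}\norm{Z\mu-G\lambda_\star+x_0+\frac{2y}{\xi L}}^2=0$. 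Your route avoids the SDP machinery and is arguably more elementary; the paper's route makes the primal--dual identity completely explicit without any appeal to uniqueness arguments. On that last point, your phrase ``read off from Lagrangian stationarity'' should be expanded slightly: what you need is that, under zero duality gap, the primal optimal point maximizes the Lagrangian at the dual optimal $(\xi,y)$, and since $\xi>0$ makes the Lagrangian strictly concave in $z'$, its maximizer $-2y/(\xi L)$ is unique and must therefore coincide with the primal value $x_0+Z\mu-G\lambda_\star$. Both your argument and the paper's rely on \cref{lem:strict_feasibility_ogm} (essential strict feasibility under the standing assumption $z_{i+1}\neq x_0$) for strong duality and dual attainment when \eqref{eq:optimize_lambdas} is bounded, so that part is identical.
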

We may assume that \eqref{eq:optimize_lambdas} is bounded\footnote{Otherwise, SPGM outputs an optimizer of $f$.}, and that $(\xi^*, z_{n+1/2})$ is an optimal solution to \eqref{eq:optimize_lambdas_dual}.

Recall that $\tau_{n-1/2}$ is the optimal value of \eqref{eq:optimize_lambdas}, that $\tau_{n,n} = \tau_{n-1/2} + \delta_n(\tau_{n-1/2})$, and that $\tau_{n,i}$ is defined by \eqref{eq:tau_ni}. We simplify our notation by setting $\delta_n = \delta_n(\tau_{n-1/2})$ and, for $i\in [n+1,N]$ $\delta_i = \delta_i(\tau_{n,i-1})$.
We also define the following auxiliary constants:
\begin{align}
	\Delta &= \|z_{n+1/2}-x_0\|^2\\
	\eta_n &= \frac{1}{2\tau_{n-1/2}\delta_n}\\ 
	\eta_i &= \frac{1}{2\tau_{n,i-1}\delta_i}\left(1 + \sum_{j=n}^{i-1}\delta_j^2\eta_j\right)\qquad\forall i\in[n+1,N].
\end{align}

We are now ready to define $(x_i,f_i,g_i)$ for $i\in[n,N]\cup\set{\star}$. We first set $g_\star = 0$.  We then define $g_n,\dots,g_{N}$ to have the following inner products:
\begin{equation*}
	\langle g_i, g_j \rangle = \eta_i L^2\Delta 1_{i = j} \text{ for }i \in [N], j \in [n,N].
\end{equation*}
That is $g_j \in \range(G)^{\perp}$ for all $j \in [n,N]$ and are also mutually orthogonal with $\|g_i\|^2=\eta_i L^2\Delta$. 
This is possible by the assumption that $d\geq N+2$.
Note that the choice to make each subsequent $g_i$ orthogonal to all previously seen gradients is also made in \cite{drori2017exact}, and intuitively is the ``least informative'' choice of gradient for an algorithm to encounter.

We then define $x_i$ and $z_{i+1}$ for $n \le i \le N$ via a mutual recursion:
\begin{align*}
	x_n &= \frac{\tau_{n-1/2}}{\tau_{n,n}}x_{n-1/2}^++\frac{\delta_n}{\tau_{n,n}}z_{n+1/2}\\
	z_{n+1} &= z_{n+1/2} - \frac{\delta_n}{L} g_n\\
	x_i &= \frac{\tau_{n,i-1}}{\tau_{n,i}}x_{i-1}^+ +\frac{\delta_i}{\tau_{n,i}}z_i\qquad\text{for all }i=n+1,\dots,N\\
	z_{i+1}&= z_{i}- \frac{\delta_i}{L} g_i\qquad\text{for all }i=n+1,\dots,N
\end{align*}
In words, the quantities $x_i$ and $z_{i+1}$ for $N \ge i \ge n$ are obtained by simulating SPGM under the assumption that in each iteration $i\geq n$, we 
see the ``worst-case'' behavior $\tau_{n,i}$ (see \cref{lem:n_step_guarantee}).

We finally define the values of $f_i$ for $i \in [n, N] \cup \{\star\}$ as well as $x_\star$. 
\begin{align}
	f_\star &= f^+_{n-1/2} - \xi^*\\
	x_\star &= z_{N+1}\\
	f_i &= f_\star + \frac{L\Delta}{2}\left(2\delta_i - 1\right)\eta_i\qquad\text{for all }i=n,\dots,N.
\end{align}
In particular,
\begin{equation*}
	f_i^+ - f_\star = L\Delta\left(\delta_i - 1\right)\eta_i\qquad\text{for all }i=n,\dots,N.
\end{equation*}
Note that by strong duality,
\begin{equation*}
	\tau_{n-1/2} = \frac{L}{2\xi^*}\norm{x_0 - z_{n+1/2}}^2 = \frac{L\Delta}{2\xi^*}.
\end{equation*}

% To give some motivation for these values: assuming that the inequality $H_n \ge 0$ is met with equality determines the value of $z_{n+1}$ and $f_\star$.  Similarly, the values of $\|g_i\|^2$ for each $i = n, \dots, N$ are chosen so that in each iteration, the inequality $H_i \ge 0$ is met with equality.

\subsection{Lower bound guarantees}
\label{subsec:lower_bound_proofs}

Let $f_\textup{hard}$ and $\set{(x_i,f_i,g_i)}_{i\in[0,N]\cup\set{\star}}$ be constructed according to \cref{subsec:lower_bound_construction}. The following propositions establish relevant properties of this construction.

\begin{proposition} \label{prop:interpolating}
	% Suppose $(\mu,\lambda_\star)$ 
	% is an optimal solution to \eqref{eq:optimize_lambdas} and  $\left(\frac{1}{f_{n-1/2}^+-f_\star}, z_{n+1/2}\right)$ is an optimal solution to \eqref{eq:optimize_lambdas_dual}. Then
	$f_\textup{hard}$ is $L$-smooth, convex, and for all $i\in[0,N]\cup\set{\star}$, has
	\begin{align*}
		f_\textup{hard}(x_i)= f_i,\qquad \grad f_\textup{hard}(x_i)= g_i.
	\end{align*}
	Moreover, $f_N = f_\star + \frac{L}{2\tau_{n,N}}\norm{x_0-x_\star}^2$.
	% Moreover, all $i\in[n,N]$, have $H_i=0$.
\end{proposition}
\begin{proposition} \label{prop:zero-chain}
	$f_\textup{hard}$ possesses the {\it zero-chain property} of~\cite{Drori2021OnTO}, that is, for any $j\in [n, N-1]$, and any
	\[
	y\in x_0 + \spann\left(\set{g_0,\dots, g_{j-1}}\right),
	\]
	we have that
	\begin{equation}
		\nabla f_\textup{hard}(y) \in \spann\set{g_0,\dots, g_{j}}. \label{eq:zero-chain-property}
	\end{equation}
\end{proposition}
These two propositions establish that any gradient-span first-order method playing the remaining iterations $n$ through $N$ against the hard instance $f_\textup{hard}$ will have its $N$th iterate lie in the affine subspace $ x_0 + \spann\left(\set{g_0,\dots, g_{N-1}}\right)$. Since $g_N$ is orthogonal to this subspace, we deduce that $f_N$ is the minimum value of $f_\textup{hard}$ on this subspace and that
any such first-order method's final iterate's objective gap must be at least $f_N - f_\star = \frac{L}{2\tau_{n,N}}\norm{x_0-x_\star}^2$.

It remains to prove \cref{prop:interpolating}, \cref{prop:zero-chain}.

The following lemmas state useful inequalities and identities. Their proofs are deferred to Appendix~\ref{app:proof_of_eta_identities}.

\begin{lemma}
	\label{lem:eta_identities}
	The following bounds hold for all $i\in[n,N]$:
	\begin{equation*}
		0<\frac{1}{2\tau_{n,i}(\delta_i-1)} \leq \eta_i\leq \frac{1}{2\tau_{n-1/2}\delta_i}.
	\end{equation*}
	Additionally, if $n\leq i < j \leq N$, then
	\begin{gather*}
		\tau_{n,i} (\delta_i - 1)\eta_i \leq \tau_{n,j}(\delta_j-1)\eta_j\qquad \text{and}\qquad 
		\delta_j\eta_j\leq (\delta_i-1)\eta_i.
	\end{gather*}
\end{lemma}

% \begin{lemma}
	%     \label{lem:crosstermexpression}
	% Suppose $i\in[n-k,n-1]$, then
	% \begin{align*}
		%     \ip{g_i, x_m - g_m} &=
		%     -Q_{m,i} + f_m - f_i + \ip{g_i,  x_i} - \frac{1}{2}\norm{g_m}^2 - \frac{1}{2}\norm{g_i}^2.
		% \end{align*}
	% \end{lemma}
% \begin{proof}
	%     We expand $Q_{m,i}$:
	%     \begin{align*}
		%         Q_{m,i} &= f_m - f_i - \ip{g_i, x_m - x_i} - \frac{1}{2}\norm{g_m - g_i}^2\\
		%         &= f_m - f_i - \ip{g_i, x_m - g_m - x_i} - \frac{1}{2}\norm{g_m}^2 - \frac{1}{2}\norm{g_i}^2.
		%     \end{align*}
	%     Rearranging this equality completes the proof.
	% \end{proof}

\begin{lemma}
	\label{lem:crossterm2}
	Suppose $j<i$ with $i\in[n,N]$. Then,
	\begin{equation*}
		\ip{g_j, x_i^+ - x_j^+} = 
		\begin{cases}
			\left(\frac{\tau_{n,j}}{\tau_{n,i}}-1\right)(\delta_j - 1)L\eta_j\Delta
			&\text{if }j\geq n\\
			\frac{\tau_{n-1/2}}{\tau_{n,i}}\ip{g_j, x_{n-1/2}^+ - z_{n+1/2}} +  \ip{g_j, z_{n+1/2} - x_j^+}
			& \text{if }j < n.
		\end{cases}
	\end{equation*}
\end{lemma}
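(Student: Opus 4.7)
The plan is to fix $j$ and proceed by induction on $i$, starting from the smallest permissible value. The key recurrence, valid for $i+1 \in [n+1, N]$, is
\[
x_{i+1} - x_j = \frac{\phi_{i+1}}{\tau_{i+1}}\left(x_i - \tfrac{1}{L}g_i - x_j\right) + \frac{\psi_{i+1}}{\tau_{i+1}}(z_{i+1} - x_j),
\]
which follows from $\phi_{i+1} + \psi_{i+1} = \tau_{i+1}$, together with an analogous identity for $x_n - x_j$ in terms of $x_m - \frac{1}{L}g_m$ and $z'_n$. Taking inner products with $g_j$ reduces the claim to tracking only the three scalars $\ip{g_j, g_i}$, $\ip{g_j, z_{i+1} - x_j}$, and (inductively) $\ip{g_j, x_i - x_j}$.

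For the case $j < n$, the base case $i = n$ follows immediately from the definition of $x_n$ after rearranging via $\phi_n/\tau_n + \psi_n/\tau_n = 1$. For the inductive step, $g_j$ is a column of $LG$ so is orthogonal to every $g_i$ with $i \geq n$, and also orthogonal to $z_{i+1} - z_i = -\tfrac{\psi_i}{L} g_i$; hence $\ip{g_j, z_{i+1} - x_j}$ is constant in $i \geq n$ and equals $\ip{g_j, z'_n - x_j}$. Plugging the inductive hypothesis into the recurrence and using $\phi_{i+1} = \tau_i$ collapses the leading coefficient from $\frac{\phi_{i+1}\phi_n}{\tau_{i+1}\tau_i}$ to $\frac{\phi_n}{\tau_{i+1}}$, matching the claim.

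For the case $j \geq n$, I would first establish the auxiliary identity $\ip{g_j, z_j - x_j} = 0$ for every $j \in [n,N]$ by a short side-induction: $\ip{g_j, z'_n - x_0} = 0$ since $z'_n - x_0 \in \range(Z) + \range(G)$ and $g_j \in (\range(Z)+\range(G))^\perp$; then expanding $x_n$ and $z_i, x_i$ for $i \in [n+1,j]$ shows that every contributing summand lies in a subspace orthogonal to $g_j$ (using mutual orthogonality of $g_n,\dots,g_{j-1}$ with $g_j$). With this identity in hand, the base case $i = j+1$ becomes a direct computation yielding $-\frac{\eta_j L\Delta}{\tau_{j+1}}\left(\phi_{j+1} + \psi_{j+1}\psi_j\right)$, which agrees with the claim via $\phi_{j+1} = \tau_j$ and $\tau_{j+1} - \tau_j = \psi_{j+1}$. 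The inductive step $i \to i+1$ uses $\ip{g_j, g_i} = 0$ for $i > j$ and $\ip{g_j, z_{i+1} - x_j} = -\psi_j\eta_j L\Delta$ (constant in $i > j$), and after substitution the coefficient on $-\psi_j\eta_j L\Delta$ simplifies via $\phi_{i+1} + \psi_{i+1} = \tau_{i+1}$ to $1$, while the coefficient on $\frac{\tau_j(\psi_j-1)\eta_j L\Delta}{\tau_i}$ simplifies via $\phi_{i+1} = \tau_i$ to $\frac{\tau_j(\psi_j-1)\eta_j L\Delta}{\tau_{i+1}}$.

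The main obstacle is not conceptual but bookkeeping: one must be careful to separate the $j < n$ contributions (which live in the history subspace spanned by $Z$ and $G$) from the $j \geq n$ contributions (which are mutually orthogonal), and to apply the correct OGM-style telescoping identity $\phi_{i+1} = \tau_i$ at each stage. Once the auxiliary identity $\ip{g_j, z_j - x_j} = 0$ is proved, both cases reduce to routine algebraic verification using only $\phi_i + \psi_i = \tau_i$ and $\phi_{i+1} = \tau_i$.
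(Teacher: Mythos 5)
Your proposal is correct and is essentially the paper's own argument in slightly different bookkeeping: the paper peels off $\ip{g_j, z_i - x_j}$ and unrolls the recursion for $\ip{g_j, x_i - z_i}$ coming from the update $x_i = \frac{\phi_i}{\tau_i}(x_{i-1}-\frac{1}{L}g_{i-1}) + \frac{\psi_i}{\tau_i}z_i$, whereas you induct directly on $\ip{g_j, x_i - x_j}$; both rest on the same ingredients, namely orthogonality of the constructed gradients to the history span, the identities $\phi_{i+1}=\tau_i$ and $\phi_i+\psi_i=\tau_i$, and the (implicit in the paper) vanishing of $\ip{g_j, z_j - x_j}$ for $j\geq n$, which you usefully make explicit.
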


\begin{lemma}
	\label{lem:subopt_value}
	It holds that $ f_N - f_\star = \frac{L}{2\tau_{n,N}}\norm{x_0-x_\star}^2$.
\end{lemma}

\begin{proof}[Proof of \cref{prop:interpolating}]
	To establish $L$-smoothness, convexity and the conditions $f(x_i)= f_i, \grad f(x_i)= g_i$, it suffices to verify $Q_{i,j}\geq 0$ for all $i,j\in[0,N]\cup\set{\star}$ due to~\cite[Theorem 1]{Drori2021OnTO}. We will prove this inequality in 9 cases depending on the value of $i,j$. These cases are outlined in the following diagram, which indicates the range of indices covered by the corresponding case:
	\[
	\bordermatrix{
		& j \in [0,n-1] & j \in [n,N] & j =\star \cr
		i \in [0,n-1] & 1 &  2 & 3\cr
		i \in [n,N] & 4 &  5/6 & 7\cr
		i = \star & 8 &  9 & \cr
	}
	\]

	\paragraph{Case 1 ($i,j\in [0,n-1]$):}
	$Q_{i,j}$ involves only quantities from $\cH = \set{(x_i,f_i,g_i)}_{i=0}^{n-1}$. Thus, as $\cH$ is produced by some $L$-smooth convex function, we have that $Q_{i,j}\geq 0$ by Lemma~\ref{lem:interpolation}.
	
	\paragraph{Case 2 ($i\in[0,n-1]$ and $j\in[n,N]$):} We have
	\begin{align*}
		Q_{i,j} &= f_i^+ -f_j^+ - \ip{g_j, x_i^+ - x_j^+}\\
		&= f_i^+ - f_j^+ - \frac{1}{L}\norm{g_j}^2\\
		&\geq (f_{n-1/2}^+-f_\star) - (f_j^+ - f_\star) - \frac{1}{L}\norm{g_j}^2\\
		&= \frac{L\Delta}{2\tau_{n-1/2}} - L\Delta(\delta_j-1)\eta_j - L\Delta\eta_j\\
		&= \frac{L\Delta}{2\tau_{n-1/2}} - L\Delta\delta_j\eta_j\\
		&\geq 0.
	\end{align*}
	Here, the second line follows as $g_j$ is orthogonal to $x_i^+ - x_j$. The third line follows as $f_{n-1/2}^+ = \min_{i\in[0,n-1]}f_i^+$. The last inequality follows by \cref{lem:eta_identities}.
	
	\paragraph{Case 3 ($i\in[0,n-1]$ and $j=\star$):} We have
	\begin{align*}
		Q_{i,\star}&= f_i^+ - f_\star\\
		&\geq f_{n-1/2}^+ - f_\star\\
		&>0.
	\end{align*}
	Here, the last line follows by dual feasibility.
	
	\paragraph{Case 4 ($i\in[n,N]$ and $j\in [0,n-1]$):} We begin by writing
	\begin{align*}
		Q_{i,j}&= f_i^+ - f_j^+ - \ip{g_j,x_i^+ - x_j^+}\\
		&= f_{n-1/2}^+ -(f_{n-1/2}^+ - f_i^+) - f_j^+ - \frac{\tau_{n-1/2}}{\tau_{n,i}}\ip{g_j, x_{n-1/2}^+ - z_{n+1/2}} -  \ip{g_j, z_{n+1/2} - x_j^+}\\
		&= \frac{\tau_{n,i}-\tau_{n-1/2}}{\tau_{n,i}}\left(f_{n-1/2}^+- f_j^+ - \ip{g_j, z_{n+1/2} - x_j^+}\right) -(f_{n-1/2}^+ - f_i^+)\\
		&\qquad + \frac{\tau_{n-1/2}}{\tau_{n,i}}\left(f_{n-1/2}^+ - f_j^+ -\ip{g_j, x_{n-1/2}^+ -x_j^+}\right)\\
		&= \frac{\tau_{n,i}-\tau_{n-1/2}}{\tau_{n,i}}\left[f_\star- f_j^+ - \ip{g_j, z_{n+1/2} - x_j^+}\right] +\frac{\tau_{n,i}-\tau_{n-1/2}}{\tau_{n,i}}\left(f_{n-1/2}^+-f_\star\right)\\
		&\qquad  -(f_{n-1/2}^+ - f_i^+)+ \frac{\tau_{n-1/2}}{\tau_{n,i}}Q_{n-1/2,j}.
		% &\geq \frac{\tau_{n,i}-\tau_{n-1/2}}{\tau_{n,i}}\frac{L\Delta}{2\tau_{n-1/2}} -\frac{L\Delta}{2}\left(\frac{1}{\tau_{n-1/2}} - 2\left(\delta_i - 1\right)\eta_i\right)\\
		% &= \frac{L\Delta}{2}\left(2\left(\delta_i - 1\right)\eta_i - \frac{1}{\tau_{n,i}}\right)\\
		% &\geq 0.
	\end{align*}
	Here, the second line uses the fact that $g_i$ is orthogonal to $g_j$ and the definition of $f_i$ and \cref{lem:crossterm2}.
	
	Now, note that the square-bracketed term in the final 
	display is nonnegative:
	\begin{equation*}
		f_\star- f_j^+ - \ip{g_j, z_{n+1/2} - x_j^+} = f^+_{n-1/2} - \xi^* - \bq_j - \left(LG^\intercal z_{n+1/2}\right)_j \geq 0
	\end{equation*}
	by the second constraint in \eqref{eq:optimize_lambdas_dual}. We also have that $Q_{n-1/2,j}\geq 0$. 
	Thus, we may continue:
	\begin{align*}
		Q_{i,j} &\geq \frac{\tau_{n,i}-\tau_{n-1/2}}{\tau_{n,i}}\left(f_{n-1/2}^+-f_\star\right)  -(f_{n-1/2}^+ - f_i^+)\\
		&= \frac{\tau_{n,i}-\tau_{n-1/2}}{\tau_{n,i}}\frac{L\Delta}{2\tau_{n-1/2}} -\frac{L\Delta}{2}\left(\frac{1}{\tau_{n-1/2}} - 2\left(\delta_i - 1\right)\eta_i\right)\\
		&= \frac{L\Delta}{2}\left(2\left(\delta_i - 1\right)\eta_i - \frac{1}{\tau_{n,i}}\right)\\
		&\geq 0.
	\end{align*}
	The last line follows from \cref{lem:eta_identities}.
	
	\paragraph{Case 5 ($i,j\in [n,N]$ with $i<j$):} 
	We have
	\begin{align*}
		Q_{i,j} &= f_i^+ - f_j^+ - \ip{g_j, x_i^+ - x_j^+}\\
		&= f_i^+ - f_j^+ - \frac{1}{L}\norm{g_j}^2\\
		&= L\Delta\left((\delta_i-1)\eta_i - \delta_j\eta_j\right)\\
		&\geq 0.
	\end{align*}
	Here, the last line follows from \cref{lem:eta_identities}.
	
	\paragraph{Case 6 ($i,j\in[n,N]$ with $i > j$):}
	We have
	\begin{align*}
		Q_{i,j}&= f_i^+ - f_j^+ - \ip{g_j, x_i^+ - x_j^+}\\
		&= L\Delta((\delta_i-1)\eta_i - (\delta_j - 1)\eta_j) - \left(\frac{\tau_{n,j}}{\tau_{n,i}}-1\right)(\delta_j - 1)L\eta_j\Delta\\
		&= \frac{L\Delta}{\tau_{n,i}}\left(\tau_{n,i}(\delta_i-1)\eta_i - \tau_{n,j}(\delta_j-1)\eta_j\right)\\
		&\geq 0.
	\end{align*}
	Here, the second line follows from \cref{lem:crossterm2} and the last line follows from \cref{lem:eta_identities}.
	
	\paragraph{Case 7 ($i\in[n,N]$ and $j= \star$):} We have
	\begin{align*}
		Q_{i,\star}&= f_i^+ - f_\star\\
		&= L\Delta(\delta_i-1)\eta_i\\
		&\geq 0.
	\end{align*}
	
	\paragraph{Case 8 ($i=\star$ and $j\in[0,n-1]$):} 
	We have
	\begin{align*}
		Q_{\star,j}&= f_\star - f_j^+ - \ip{g_j,x_\star - x_j^+}\\
		&= f_\star - f_j^+ - \ip{g_j, z_{n+1/2} - x_j^+}\\
		&= f_{n-1/2}^+ - \xi^* - \bq_j - (LG z_{n+1/2})_j\\
		&\geq 0.
	\end{align*}
	Here, the second line follows as $x_\star = z_{n+1/2} -\sum_{\ell = n}^N \frac{\delta_\ell}{L} g_\ell$ where $g_\ell$ is orthogonal to $g_j$ for all $\ell\in[n,N]$.
	The inequality follows from the second constraint of \eqref{eq:optimize_lambdas_dual}.
	
	\paragraph{Case 9 ($i = \star$ and $j \in [n,N]$):} 
	We have
	\begin{align*}
		Q_{\star,j} &= f_\star - f_j^+ - \ip{g_j, x_\star - x_j^+}\\
		&=  f_\star - f_j^+ + \frac{\delta_j}{L} \norm{g_j}^2 - \frac{1}{L}\norm{g_j}^2\\
		&=  -L\Delta(\delta_j - 1)\eta_j + 
		L\Delta \delta_j\eta_j - L\Delta\eta_j\\
		&= 0.\qedhere
	\end{align*}
	
	% 
	% \KS{I think that we no longer need these inequalities to equal 0. I would recommend cutting this part, as these also reference the now removed OGM_induction and SPGM_induction equations.}
	% For our final claim, first observe that the intermediate hypothesis $H_{n-1/2}=0$ follows from our definitions of $f_{n-1/2}^+$, $z_{n+1/2}$, and $x_\star$ as
	% \begin{align*}
		% 	H_{n-1/2} &= \tau_{n-1/2}(f_\star - f^+_{n-1/2}) + \frac{L}{2}\|x_0 -z_{n+1/2}\|^2 = - \frac{L}{2}\Delta + \frac{L}{2}\Delta = 0.
		% \end{align*}
	% Above, we verified that each $Q_{\star, i}=0$. Similarly, observe that $Q_{i-1,i}=L\Delta ((\delta_{i-1}-1)\eta_{i-1} - \delta_i\eta_i) = 0$ by Lemma~\ref{lem:eta_identities}. Then the fact that $H_i=0$ follows inductively by recalling that $H_i$ is a conic combination of $H_{i-1},Q_{i-1,i},Q_{\star,i}$ via~\eqref{eq:OGM_induction} and~\eqref{eq:SPGM_induction}.
\end{proof}

\begin{proof}[Proof of \cref{prop:zero-chain}]
	Conditions ensuring that \eqref{eq:worst-case-function} satisfies \eqref{eq:zero-chain-property} for $j\in [n,N-1]$ are given by Drori and Taylor~\cite[Theorem 3]{Drori2021OnTO}. In particular, it suffices to show
	\begin{align*}
		&\langle g_i, g_j \rangle = \langle g_i, g_\ell \rangle, \qquad \forall j \in[n, N-1], i\in [0,j-1], \ell\in [j+1,N]\\
		% &f_j - \frac{1}{L}\langle g_i, g_j\rangle + \frac{1}{2L}\|g_j-L(x_j-x_0)\|^2 - \frac{L}{2}\|x_j-x_0\|^2\\
		% &\quad \geq f_\ell - \frac{1}{L}\langle g_i, g_\ell\rangle + \frac{1}{2L}\|g_\ell-L(x_\ell-x_0)\|^2 - \frac{L}{2}\|x_\ell-x_0\|^2,\qquad \forall i\in [0,N], \forall n\leq j < \ell\leq N\\
		&Q_{i,\ell} - Q_{i,j} + \langle g_\ell - g_j, x_i -x_0\rangle \geq 0,\qquad \forall i\in [0,N], \forall n\leq j < \ell\leq N\\
		&g_j \text{ is linearly separable from } \{g_{j+1},\dots,g_N\}, \qquad \forall j\in [n,N-1].
	\end{align*}
	The first and third conditions above hold immediately as the considered gradients are all nonzero and orthogonal to each other. Hence, we only need to verify the second condition.
	Let $n\leq j<\ell\leq N$ and let $i\in[0,N]$.
	We break this proof into different cases depending on the ordering of $i,\,j,\,\ell$.
	
	% , which can be algebraically simplified to
	% $$ Q_{i,\ell} - Q_{i,j} + \langle g_\ell - g_j, x_i -x_0\rangle \geq 0. $$
	% We verify this below:
	
	First, suppose $i< j<\ell$. Since $\langle g_\ell - g_j, x_i -x_0\rangle = 0$, we have
	% \begin{align*}
		% 	Q_{i,\ell} - Q_{i,j} + \langle g_\ell - g_j, x_i -x_0\rangle = f_j - f_\ell + \frac{(\eta_j-\eta_\ell)L\Delta}{2} = L\Delta(\delta_j\eta_j - \delta_\ell\eta_\ell) \geq 0
		% \end{align*}
	\begin{align*}
		Q_{i,\ell} - Q_{i,j} + \langle g_\ell - g_j, x_i -x_0\rangle &= 
		f_j^+ - f_\ell^+ - \ip{g_\ell, x_i^+ - x_\ell^+}
		+\ip{g_j, x_i^+ - x_j^+}\\
		&=f_j^+ - f_\ell^+ - \frac{1}{L}\norm{g_\ell}^2 + \frac{1}{L}\norm{g_j}^2\\
		&= L\Delta(\delta_j\eta_j - \delta_\ell\eta_\ell) \geq 0,
	\end{align*}
	which is nonnegative as $\delta_j\eta_j \geq \delta_\ell\eta_\ell$ by Lemma~\ref{lem:eta_identities}.
	
	Next, suppose $i=j<\ell$. Again, $\langle g_\ell - g_j, x_i -x_0\rangle = 0$, so
	\begin{equation*}
		Q_{i,\ell} - Q_{i,j} + \langle g_\ell - g_j, x_i -x_0\rangle = Q_{i,\ell} -0+0\geq 0.
	\end{equation*}
	
	Suppose $j < i < \ell$. We may rewrite $\langle g_\ell - g_j, x_i -x_0\rangle = -\langle g_j, x_i^+ -x_j^+\rangle + \frac{1}{L}\norm{g_j}^2$. Thus,
	\begin{align*}
		&Q_{i,\ell} - Q_{i,j} + \langle g_\ell - g_j, x_i -x_0\rangle\\
		% &= \left(f_i^+ - f_\ell^+ - \ip{g_\ell, x_i^+ - x_\ell^+}\right) - \left(f_i^+ - f_j^+ - \ip{g_j, x_i^+ - x_j^+}\right) -\langle g_j, x_i^+ -x_j^+\rangle + \frac{1}{L}\norm{g_j}^2\\
		&= f_j^+ - f_\ell^+ - \ip{g_\ell, x_i^+ - x_\ell^+} + \frac{1}{L}\norm{g_j}^2\\
		&= L\Delta(\delta_j\eta_j - \delta_\ell\eta_\ell)\\
		&\geq 0
	\end{align*}
	where the final inequality uses Lemma~\ref{lem:eta_identities}.
	
	Next, suppose $j < i = \ell$. We may rewrite $\langle g_\ell - g_j, x_i -x_0\rangle = -\langle g_j, x_i^+ -x_j^+\rangle + \frac{1}{L}\norm{g_j}^2$. Thus, again by Lemma~\ref{lem:eta_identities},
	\begin{align*}
		&Q_{i,\ell} - Q_{i,j} + \langle g_\ell - g_j, x_i -x_0\rangle\\
		&= -f_i^+ + f_j^+ + \frac{1}{L}\norm{g_j}^2\\
		&= -L\Delta(\delta_i-1)\eta_i + L\Delta\delta_j\eta_j \geq 0.
	\end{align*}
	Finally, suppose $j < \ell < i$. Since $\langle g_\ell - g_j, x_i -x_0\rangle = \langle g_\ell, x_i^+ -x_\ell^+\rangle - \frac{1}{L}\norm{g_\ell}^2 - \langle g_j, x_i^+ -x_j^+\rangle + \frac{1}{L}\norm{g_j}^2$. Thus, by Lemma~\ref{lem:eta_identities},
	\begin{align*}
		&Q_{i,\ell} - Q_{i,j} + \langle g_\ell - g_j, x_i -x_0\rangle\\
		&= f_j^+ - f_\ell^+ - \frac{1}{L}\norm{g_\ell}^2 + \frac{1}{L}\norm{g_j}^2\\
		&= L\Delta(\delta_j-1)\eta_j - L\Delta(\delta_\ell-1)\eta_\ell - L\Delta\eta_\ell + L\Delta \eta_j\\
		&= L\Delta(\delta_j\eta_j-\delta_\ell\eta_\ell)\geq 0.\qedhere
	\end{align*}
\end{proof}

\section{Limited-Memory SPGM}
\label{sec:lSPGM}
\subsection{A limited memory algorithm}
Running SPGM as defined in~\cref{alg:SPGM} requires storing a history of all previous iterates and auxiliary vectors explicitly in order to produce an update. In practice, this overhead may be overly costly, and it would be preferable to store only a fixed amount of history. Our approach can be directly modified to only store a fixed number of vectors, while still being practically performant and maintaining theoretical guarantees on its performance (though this method will no longer be subgame perfect).

To see this, note that
the main result ensuring the correctness of the convergence rate of SPGM, \cref{lem:half_update_pc}, only requires a feasible solution to \eqref{eq:optimize_lambdas}. In particular, if we modify \eqref{eq:optimize_lambdas} to have the additional constraints that $\mu_i = \lambda_{\star, i} = 0$ for all $i \in [0, \dots, n - k-1]$, and use the output of the resulting subproblem to make our updates, \cref{lem:half_update_pc} will continue to hold. The effect of these constraints is to remove the dependence of the algorithm on the information gained in iterations prior to iteration $n-k$.

Formally, we will define the limited memory analogue of \eqref{eq:optimize_lambdas} given a fixed budget $k \in [0, \dots, N]$ of memory.
% in terms of limited memory analogues of the constants in \cref{subsec:subproblem}.
We will use $\hat{\cdot}$ to denote the restriction of a vector to its last $k$ rows or a matrix to its last $k$ columns. We define
\begin{equation}
	\label{eq:l_optimize_lambdas}
	\sup_{\mu,\lambda_\star\in\R^{k}}\set{\ip{\hat{\btau},\mu} + \ip{\bo,\lambda_\star} :\, \begin{array}{l}
			\frac{L}{2}\left(\norm{z}^2 - \norm{x_0}^2\right)\leq 
			\ip{\mu,\hat{\bh}-f^+_{n-1/2}\hat{\btau}} + \ip{\lambda_\star, \hat{\bq} - f^+_{n-1/2}\mb 1}\\
			\tilde{z} = \hat{Z}\mu - \hat{G}\lambda_\star + x_0\\
			\mu,\lambda_\star\geq 0
	\end{array}}.
	\tag{k-SPGM-Subproblem}
\end{equation}
Here, $f_{n-1/2}^+ = \min_{i\in[n-k,n-1]}f_i^+$ and $x_{n-1/2}$ is defined to be $x_{n-1/2} = x_m$ for some $m\in\argmin_{i\in[n-k,n-1]}f_i^+$.
In \cref{alg:k_SPGM}, we define $k$-SPGM identically to SPGM, except with \eqref{eq:optimize_lambdas} replaced by \eqref{eq:l_optimize_lambdas}:
\begin{algorithm}[H]
	\caption{$k$-SPGM}
	\label{alg:k_SPGM}
	Given $L$-smooth convex function $f$, initial iterate $x_0$, iteration budget $N$, memory bound $k > 1$:
	\begin{itemize}
		\item Define $\tau_{0,0} = 2$ and $z_1 = x_0 - \frac{2}{L}g_0$
		\item For $n = 1,\dots,N$
		\begin{enumerate}
			\item Reveal $g_{n-1} = \grad f(x_{n-1})$ and update first-order information set $\cH\coloneqq \set{(x_i,f_i,g_i)}_{i={n-k}}^{n-1}$
			\item 
			Define $x_{n-1/2} = x_m$ where $m \in \argmin_{i\in[n-k,n-1]}f^+_i$ 
			and let $\tau_{n-1/2}$ be the optimal value of \eqref{eq:l_optimize_lambdas}.
			\item If \eqref{eq:l_optimize_lambdas} is unbounded, output $x_{n-1/2}^+$. Otherwise, define $z_{n+1/2}$ by \eqref{eq:x_half_def}.
			\item Define
			\begin{align*}
				\tau_{n,n} & \coloneqq \tau_{n-1/2} +\delta_n(\tau_{n-1/2}),\\
				x_n & \coloneqq \frac{\tau_{n-1/2}}{\tau_{n,n}}x_{n-1/2}^+ + \left(1 - \frac{\tau_{n-1/2}}{\tau_{n,n}}\right) z_{n+1/2},\\
				z_{n+1} & \coloneqq z_{n+1/2} - \frac{\delta_n(\tau_{n-1/2})}{L} g_n,
			\end{align*}
			where $\delta_n(\cdot)$ is defined in \eqref{eq:delta}.
		\end{enumerate}
	\end{itemize}
\end{algorithm}

Because any feasible solution to \eqref{eq:l_optimize_lambdas} is also feasible for \eqref{eq:optimize_lambdas} with the same objective value, \cref{lem:half_update_pc} can be used to prove dynamic convergence rates for $k$-SPGM:
\begin{theorem}
	Let $x_0, \dots, x_N \in \R^d$ and $\tau_{0,0}, \dots, \tau_{N,N} \in \R$ be as defined in \Cref{alg:k_SPGM}.
	For any $1 \le n \le N$, we have that 
	\[
	f(x_N) - f(x_\star) \le \frac{L}{2\tau_{n,N}} \|x_0 - x_\star\|^2 \leq \frac{L}{2\tau_{0,N}}\norm{x_0 -x_\star}^2,
	\]
	where $\tau_{n,N}$ is defined in terms of $\tau_{n,n}$ as in \eqref{eq:tau_ni}.
\end{theorem}

\subsection{Running costs}
Running $k$-SPGM requires storing the vectors $\hat\btau,\hat\bh,\hat\bq\in\R^k$, the matrices $\hat{Z},\hat{G}\in\R^{d\times k}$, and the points $x_{n-k},\dots,x_{n-1}\in\R^d$. 
Note that by substituting out the definition of $\tilde{z}$ in \eqref{eq:l_optimize_lambdas}, one can rewrite
\begin{align*}
	\norm{\tilde{z}}^2 - \norm{x_0}^2 &= \norm{\hat Z \mu - \hat G \lambda_\star + x_0}^2 - \norm{x_0}^2\\
	&= \mu^\intercal(\hat Z^\intercal\hat Z)\mu +
	\lambda_\star^\intercal(\hat G^\intercal\hat G)\lambda_\star + 
	2 \mu^\intercal(\hat Z^\intercal\hat G)\lambda_\star
	+ 2(x_0^\intercal \hat Z)\mu - 2(x_0^\intercal \hat G) \lambda_\star
\end{align*}
to express this quantity in terms of $k\times k$-dimensional matrices and $k$-dimensional vectors only.
Thus, we will dynamically maintain the inner product matrices $\hat{Z}^\intercal \hat{Z}$, $\hat{G}^\intercal \hat{G}$, $\hat{G}^\intercal \hat{Z}\in\R^{k\by k}$ and the vectors $\hat Z^\intercal x_0$ and $\hat G^\intercal x_0$. This amounts to a total storage overhead of order $O(dk)$ assuming that $k\ll d$.
The computational overhead in each iteration involves updating these matrices and vectors in $O(dk)$ time and solving the convex quadratic optimization problem \eqref{eq:optimize_lambdas} in $2k$ variables, addressable by an interior point method via $O(\sqrt{k})$ $k\times k$ linear system solves. Assuming that $k\ll d$, the computational cost of solving the low-dimensional optimization problem is dominated by $O(dk)$.

\section{Implementations and Numerical Results}
\label{sec:numerical}
\subsection{Numerical results}
This section evaluates the computational performance of SPGM on unconstrained smooth minimization problems involving random and real data.
We implement SPGM in Julia and use MOSEK~\cite{mosek} via JuMP~\cite{Lubin2023} to solve \eqref{eq:optimize_lambdas} in each iteration of SPGM.
All experiments are run on an Intel i7 processor with 64GB of memory. A GitHub repository containing our implementations and preliminary numerical experiments is available at
\begin{center}
	\url{https://github.com/ootks/SubgamePerfectGradientMethod}
\end{center}

We compare the performance of SPGM and its limited memory variant with the following benchmarks: gradient descent (GD) with constant stepsize, i.e., $x_{n} = x_{n-1} - \frac{1}{L}\nabla f(x_{n-1})$,
the Optimized Gradient Method (OGM)~\cite{Kim2016optimal}, Universal Fast Gradient Method~\cite{nesterov2015universal}, and BFGS and its limited-memory variant (L-BFGS)~\cite{liu1989limited}.
We implement GD, OGM, and UFGM directly in Julia and use the implementations of BFGS and L-BFGS (restricted to a memory of ten past iterations) provided in \texttt{Optim.jl}~\cite{mogensen2018optim}. 

GD, OGM, and UFGM 
form one collection of baseline first-order methods against which we compare SPGM. These are algorithms with well-established non-asymptotic convergence theory. Overall, we find SPGM with either full memory or a memory limited to size $k=10$ (denoted SPGM-10) consistently outperforms GD, OGM, and UFGM by exploiting the non-adversarialness of the given problem instances. The performance of UFGM is strongest among these three comparison algorithms. We attribute this to its adaptive selection of a smoothness constant $L_n$ at each iteration as a function of observed first-order information. When the given theoretically supported global smoothness constant $L$ is much larger than the local smoothness in a neighborhood of the iterates or the minimizer, GD, OGM, and SPGM all take more conservative steps. UFGM adapts this constant to speed up. This mechanism of learning from observed first-order data to improve an estimate of $L$ is orthogonal to this work's core thrust, learning from observed first-order data to (exactly optimally) improve an algorithm's performance guarantee. The development of future, practical methods incorporating these two distinct forms of learning from data is left as an important future direction.

BFGS and L-BFGS form a second collection of baseline first-order methods against which we compare SPGM.
Although BFGS and L-BFGS lack strong non-asymptotic theory, they are known to provide state-of-the-art practical performance. These methods (and quasi-Newton methods more generally) are substantially similar to our subgame perfect methods in their computational requirements: They are gradient span algorithms and maintain a history of gradients and iterates used at each iteration to compute each update.
SPGM and its associated guarantee $1/\tau_{n,n}$ stay fairly competitive with the convergence of BFGS and L-BFGS. Figure~\ref{fig:representative-experiment} shows the scaled objective gap $(f(x_n)-f(x_\star))/\frac{L}{2}\|x_0-x_\star\|^2$ and $1/\tau_{n,n}$ for each method on two representative problem instances from the larger experiments below with random and real problem data respectively.
\begin{figure}[h]
	\centering{\includegraphics[width=0.4\textwidth]{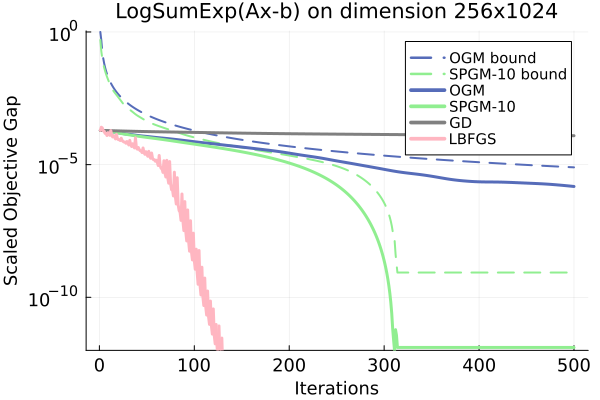}\includegraphics[width=0.4\textwidth]{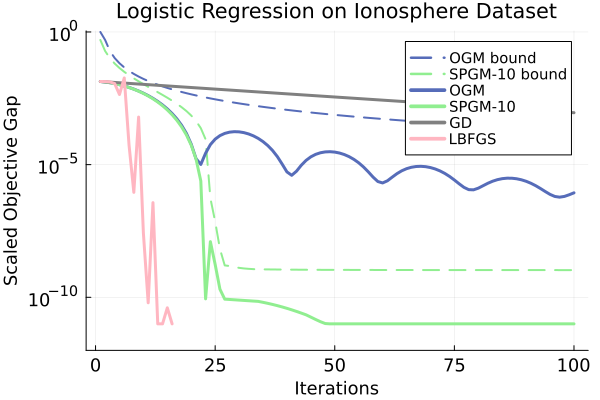}}
	\caption{Two representative plots of convergence of $(f(x_n)-f(x_\star))/\frac{L}{2}\|x_0-x_\star\|^2$ for the considered methods.  Left: A randomly generated instance of minimizing the logSumExp function~\eqref{eq:logSumExp} with dimensions $d=256,m=1024$. Right: An instance of the logistic regression~\eqref{eq:logisticRegression} using the LIBSVM dataset ``\texttt{ionosphere}'' with $d=34,m=351$. On these instances, the limited memory versions of SPGM and BFGS had nearly identical convergence to their full memory counterparts and are omitted.}\label{fig:representative-experiment}	
\end{figure}

The optimal subgame payoff, $\tau_{n,N}$, of the remaining game after $n$ iterations
is a function of $n$ that increases with each suboptimal or non-adversarial response from the first-order oracle. Figure~\ref{fig:payoffs} plots $1/\tau_{n,N}$ for SPGM and SPGM-10, as a function of $n$. We also plot the constant non-adaptive guarantee of OGM, $1/\tau_{0,N}$, for reference.

\begin{figure}[h]
	\centering{\includegraphics[width=0.4\textwidth]{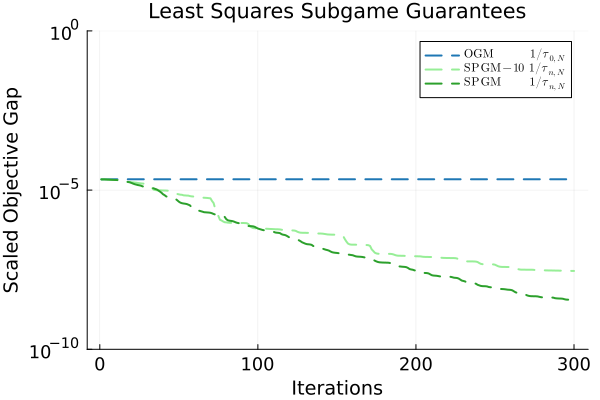}\includegraphics[width=0.4\textwidth]{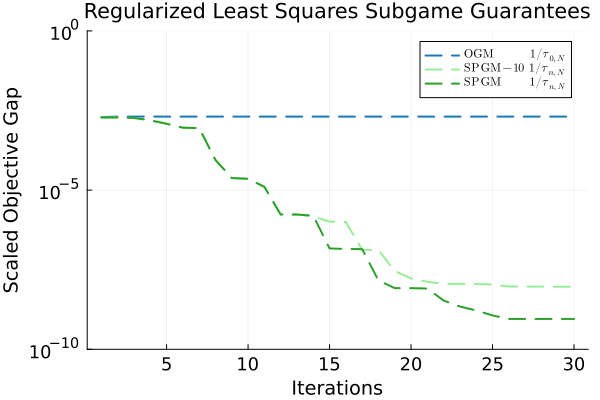}}
	\caption{Two representative plots of $1/\tau_{n,N}$ as a function of $n$ as SPGM-10 and SPGM are run on Least Squares Regression problems of dimension $d=512,m=2048$. OGM displays its constant guarantee, $1/\tau_{0,N}$, providing the baseline, non-adaptive guarantee.  Left instance is of the form~\eqref{eq:BasicLeastSquares} with $N=300$ and right instance is of the form~\eqref{eq:LSR_HuberL1} with $N=30$. }\label{fig:payoffs}	
\end{figure}

\subsection{A sample of randomly generated problem instances}
We consider six different families of smooth convex minimization problems all parameterized by $A\in\mathbb{R}^{m \times d}$,  $b\in\mathbb{R}^{m}$, and $x_0\in\mathbb{R}^d$. For each problem family, we draw seven problem instances $A,b,x_0$ sampled with i.i.d.~normal entries, each with fixed, different problem dimensions $d=8,16,32,\dots, 512$ and $m=4d$. Together these provide a collection of $42$ synthetic problem instances to compare performance across.
The problem instances considered fall within two general categories.

First, we consider (regularized) least squares regression problems defined in the following four ways:
\begin{align}
	&\min_{x\in\mathbb{R}^d} f(x)=\frac{1}{m}\|Ax-b\|^2_2, \label{eq:BasicLeastSquares}\\
	&\min_{x\in\mathbb{R}^d} f(x)= \frac{1}{m}\|Ax-b\|^2_2 + \frac{1}{2}\|x\|^2_2,\\
	&\min_{x\in\mathbb{R}^d} f(x)= \frac{1}{m}\|Ax-b\|^2_2 + h_{100}(\|x\|_2),\\
	&\min_{x\in\mathbb{R}^d} f(x)= \frac{1}{m}\|Ax-b\|^2_2 + \sum_{i=1}^d h_{100}(|x_i|)\label{eq:LSR_HuberL1}
\end{align}
where $h_L(r)$ takes the value $\frac{L}{2}r^2$ if $r\leq 1$ and the value $Lr - \frac{L}{2}$ otherwise, denotes the Huber function. Note $h_{100}(\|x\|_2)$ provides a smooth approximation of $100 \|x\|_2$ and $\sum_{i=1}^d h_{100}(|x_i|)$ provides a smooth approximation of regularization by $100\|x\|_1$. While the first two definitions have constant Hessian, the latter may not have Hessians exist everywhere.
Second, we consider smoothed approximations of minimizing a finite maximum $\max_i \{a_i^Tx - b_i\}$, defined in the following two ways:
\begin{align}
	&\min_{x\in\mathbb{R}^d} f(x)= \log\left(\sum_{i=1}^m \exp(a_i^Tx - b_i)\right), \label{eq:logSumExp}\\
	&\min_{x\in\mathbb{R}^d} f(x)= \rho_{\max}(Ax-b) \label{eq:MoreauMax}
\end{align}
where $\rho_{\max}(z) = \min_{z'}\{\max_i z'_i + \frac{1}{2}\|z'-z\|^2_2\}$ denotes the Moreau envelope of the max function. Note while the first formulation is analytic, the second is not even twice differentiable everywhere.

Sampling seven instances from each of these six classes provides our first test set of $42$ problem instances. Figure~\ref{fig:random-experiment} shows the performance of each considered method, displaying at each iteration $n$, the fraction of these $42$ problem instances for which the method has reached accuracy
$$ \frac{f(x_n)-f(x_\star)}{\frac{L}{2}\|x_0-x_\star\|^2} \leq \{10^{-3},\ 10^{-6},\ 10^{-9}\}$$
corresponding to the method reaching a low, medium, or high level of (appropriately rescaled) accuracy.
\begin{figure}
	\includegraphics[width=0.33\textwidth]{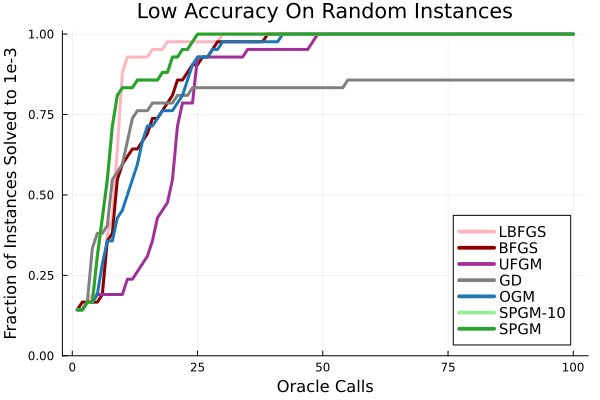}\includegraphics[width=0.33\textwidth]{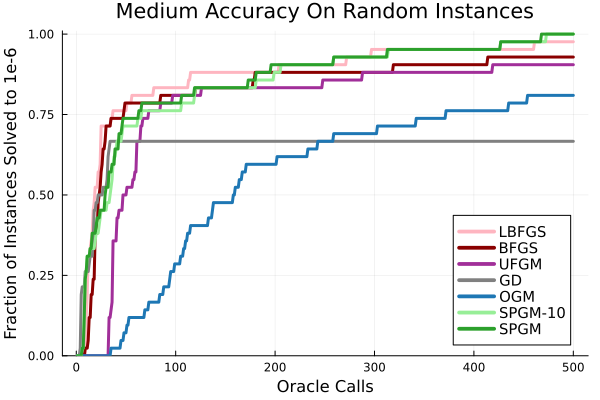}\includegraphics[width=0.33\textwidth]{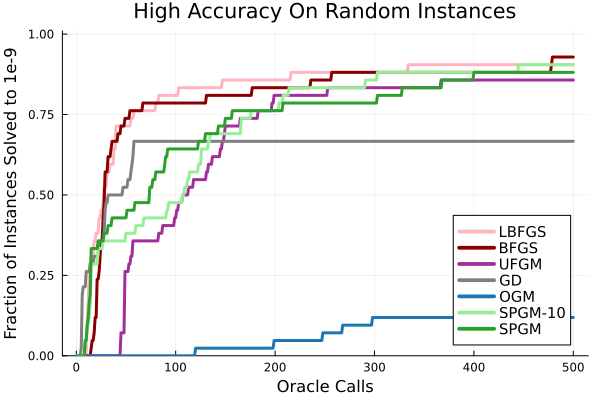}
	\caption{Performance comparison over $42$ randomly generated instances for the problems~\eqref{eq:BasicLeastSquares}--\eqref{eq:MoreauMax}. From left to right, the target accuracy ranges as $10^{-3},\ 10^{-6},\ 10^{-9}$. }\label{fig:random-experiment}
\end{figure}

The per-iteration cost of SPGM-10 as problem dimension grows should be dominated by gradient oracle costs as our subproblem only depends on the constant memory size $k$, asymptotically matching the order of our comparison methods. Our implementation solves these subproblems with independent calls to Mosek at each iteration incurring nontrivial, but constant, computational overheads. We expect with optimized numerical linear algebra and warm-starting/preconditioning subproblems this overhead could be substantially reduced. Table~\ref{tab:per-iter} shows the average seconds per-iteration in the above experiment before reaching accuracy $10^{-9}$ at each dimension $32,\dots, 512$. Per-iteration costs for GD and UFGM are similar to OGM. We see SPGM-10's cost scales linearly in $d$, with our direct implementation being roughly ten times slower than \texttt{Optim.jl}'s LBFGS implementation.

\begin{table}	\centering
	\begin{tabular}{|c|c|c|c|c|c|}
		\hline
		Method &  $d=32$ & $d=64$ & $d=128$ & $d=256$ & $d=512$\\\hline
		LBFGS & $5.00\times 10^{-5}$ & $1.87\times 10^{-4}$ & $4.20\times 10^{-4}$ & $4.42\times 10^{-4}$ & $2.75\times 10^{-3}$\\
		BFGS & $4.01\times 10^{-5}$ & $1.94\times 10^{-4}$ & $4.19\times 10^{-4}$ & $4.91\times 10^{-4}$ & $2.90\times 10^{-3}$\\
		OGM & $6.53\times 10^{-5}$ & $2.11\times 10^{-4}$ & $3.74\times 10^{-4}$ & $9.91\times 10^{-4}$ & $6.33\times 10^{-3}$\\
		SPGM\text{-}10 & $3.03\times 10^{-3}$ & $3.83\times 10^{-3}$ & $5.81\times 10^{-3}$ & $1.12\times 10^{-2}$ & $2.17\times 10^{-2}$\\
		SPGM & $9.54\times 10^{-3}$ & $1.30\times 10^{-2}$ & $2.97\times 10^{-2}$ & $7.78\times 10^{-2}$ & $2.12\times 10^{-1}$\\ \hline
		
	\end{tabular}
	\caption{Average seconds per-iteration of methods on our synthetic problem test set.}\label{tab:per-iter}
\end{table}

\subsection{Regression problem instances derived from real data}
To provide a more realistic numerical survey, we next consider performance on two types of regression problems with data from the LIBSVM dataset~\cite{libsvm}. We consider six regularized least squares regressions of the form~\eqref{eq:LSR_HuberL1} with data from the ``\texttt{housing}'', ``\texttt{mpg}'', ``\texttt{pyrim}'', ``\texttt{space\_ga}'', ``\texttt{triazines}'', ``\texttt{bodyfat}'' datasets. Further, we consider six regularized logistic regression problems defined as
\begin{align}
	&\min_{x\in\mathbb{R}^d} f(x)= \frac{1}{m}\sum_{i=1}^m\log( 1 + \exp(b_i \cdot a_i^T x)) + \frac{1}{2m}\|x\|^2_2, \label{eq:logisticRegression}
\end{align}
using normalized data $A,b$ from the classification datasets ``\texttt{australian}'', ``\texttt{diabetes}'', ``\texttt{heart}'', ``\texttt{ionosphere}'', ``\texttt{splice}'', and ``\texttt{sonar}''. The performance on these twelve instances is presented in Figure~\ref{fig:real-experiment}.
\begin{figure}
	\includegraphics[width=0.33\textwidth]{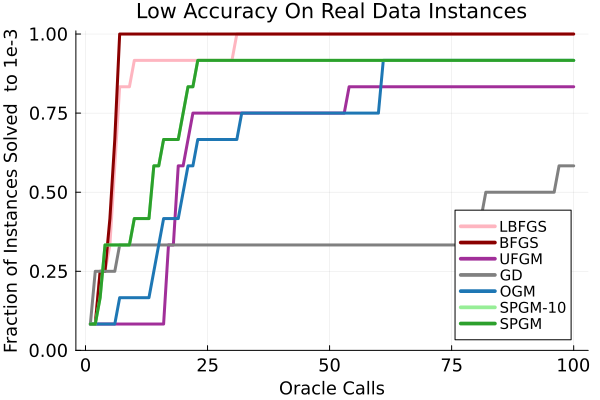}\includegraphics[width=0.33\textwidth]{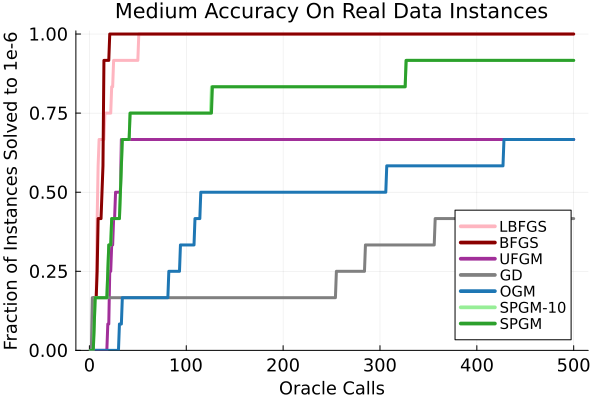}\includegraphics[width=0.33\textwidth]{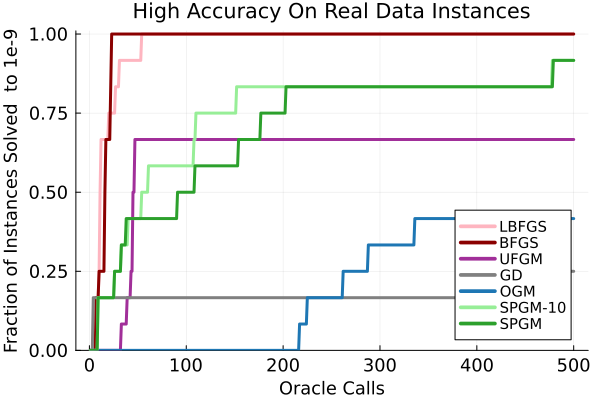}
	\caption{Performance comparison over twelve instances derived from LIBSVM data for the regression problems~\eqref{eq:LSR_HuberL1} and~\eqref{eq:logisticRegression}. From left to right, the target accuracy ranges as $10^{-3},10^{-6},10^{-9}$. Often, the performance of SPGM-10 matched that of SPGM with full memory, and hence, the lines overlap. }\label{fig:real-experiment}
\end{figure}

We see that SPGM and its limited memory variant continue to provide strong performance relative to other methods with nonasymptotic theory (GD, OGM, UFGM) but lags farther behind the quasi-Newton methods. We suspect the spectral properties of Hessians in this real data problems differ in nature from those of random matrices. The Newton-type methods appear to benefit strongly from this. This indicates there may be potential for preconditioning to improve the performance of SPGM (and GD, OGM, UFGM).

\section{Conclusion and discussion}

This paper offers a framework for analyzing first-order methods beyond the classical notion of minimax optimality.
In contrast to minimax optimality, which can be thought of as a strategy in a Nash Equilibrium for the minimization ``game'', the notion of a Subgame Perfect Equilibrium requires that a first-order method optimally capitalize on any non-adversarial first-order information.
We believe this is a fundamental avenue towards deriving formal beyond-worst-case results in the design of optimization algorithms.

As a first algorithm in this space, we design the Subgame Perfect Gradient Method (SPGM) for unconstrained smooth convex minimization.
SPGM matches the performance guarantee of the Optimized Gradient Method when faced with an adversarial function, 
but \emph{improves} upon these guarantees dynamically as first-order information is revealed. We show that SPGM's dynamic convergence guarantees are optimal among algorithms that can adapt to past information.
We conjecture that subgame perfect versions of known minimax optimal first-order methods such as ITEM for smooth strongly convex optimization~\cite{Drori2021OnTO} and OGM-G for minimizing the gradient norm~\cite{Kim2021gradient} also exist and leave these extensions as future work. The potential for dynamic guarantees on gradient norm convergence would open several further practical advantages. Since the gradient norm can be computed at runtime, unlike suboptimality, this could facilitate the design of new adaptive restarting schemes for problem settings like strongly convex optimization known to benefit from restarting.\\

\noindent {\bf Acknowledgments.}
This work was supported in part by the Air Force Office of Scientific Research under award number FA9550-23-1-0531. Benjamin Grimmer was additionally supported as a fellow of the Alfred P. Sloan Foundation.

\bibliographystyle{unsrt}
\bibliography{bib}

\appendix

	\section{Deferred proofs}
\label{sec:deferred}

\begin{proof}
	[Proof of \cref{lem:OGM_initialization}]
	We expand
	\begin{align*}
		&f_\star - f_0^+ - \frac{L}{4}\norm{z_1-x_\star}^2 + \frac{L}{4}\norm{x_0-x_\star}^2\\
		&\qquad = f_\star - f_0^+ 
		- \frac{1}{L}\norm{g_0}^2
		+\ip{x_0-x_\star, g_0}\\
		&\qquad = f_\star - f_0^+ 
		-\ip{g_0, x_\star - x_0^+}.
	\end{align*}
	The expression on the final line is $Q_{\star,0}$ which is nonnegative by \cref{lem:interpolation}.
\end{proof}

\begin{proof}
	[Proof of \cref{lem:OGM_induction}]
	
	By assumption, $z$ is an auxiliary vector for $x$ with pre-rate $\tau$ so
	\begin{equation*}
		\tau (f_\star - f(x)^+) - \frac{L}{2}\norm{z-x_\star}^2 + \frac{L}{2}\norm{x_0-x_\star}^2\geq 0.
	\end{equation*}
	Let $g= \grad f(x')$.
	By \cref{lem:interpolation}, the following quantities are nonnegative
	\begin{gather*}
		f(x)^+ - f(x')^+ - \ip{g, x^+ - (x')^+}\geq 0,\\
		f_\star - f(x')^+ - \ip{g, x_\star - (x')^+} \geq 0.
	\end{gather*}
	We sum these three nonnegative expressions with weights $(1,\tau, \delta)$ to get
	\begin{equation*}
		\tau' (f_\star - f(x')^+) + \frac{L}{2}\norm{x_0-x_\star}^2 
		- \left(\frac{L}{2}\norm{z-x_\star}^2 - \ip{g, \delta (z-x_\star) - \frac{\tau'}{L} g}\right)\geq 0.
	\end{equation*}
	The remainder of the proof simplifies the parenthetical term in the two cases.
	
	First, suppose $\delta = 1 + \sqrt{1+2\tau}$. In this case $\delta^2 = 2 (\tau + \delta) = 2\tau'$. Thus, the parenthetical term is
	\begin{equation*}
		\frac{L}{2}\norm{z-x_\star}^2 - \ip{\delta g, z-x_\star} + \frac{1}{2L} \norm{\delta g}^2 = \frac{L}{2}\norm{z' - x_\star}^2.
	\end{equation*}
	We deduce that $z'$ is an auxiliary vector for $x'$ with pre-rate $\tau'$.
	
	Now, suppose $\delta = \frac{1 + \sqrt{1+4\tau}}{2}$. In this case $\delta^2 = \tau + \delta = \tau'$. Thus, the parenthetical term is 
	\begin{equation*}
		\frac{L}{2}\norm{z-x_\star}^2 - \ip{\delta g, z-x_\star} + \frac{1}{L} \norm{\delta g}^2 = \frac{L}{2}\norm{z_{n+1} - x_\star}^2 + \frac{\tau'}{2L}\norm{ g}^2.
	\end{equation*}
	We deduce that $z'$ is an auxiliary vector for $x'$ with rate $\tau'$.
\end{proof}

\begin{proof}[Proof of \cref{lem:dual}]
	Standard manipulations for computing the dual to a conic program gives the following dual program to \eqref{eq:optimize_lambdas}:
	\begin{equation*}
		\inf_{\xi, w \in\R,z\in\R^d}\set{
			w :\, \begin{array}{l}
				\begin{pmatrix}
					- LZ^\intercal & \btau\\
					LG^\intercal &  \mb 1
				\end{pmatrix}
				\begin{pmatrix} z\\\xi\end{pmatrix}
				\le \begin{pmatrix}f_{n-1/2}^+ \btau - \bh\\ 
					f_{n-1/2}^+ \mb 1 - \bq
				\end{pmatrix}\\
				w \xi \ge \frac{L}{2}\norm{x_0 - z}^2\\
				\xi, w >0
		\end{array}}.
	\end{equation*}
	This problem is easily seen to be equivalent to \eqref{eq:optimize_lambdas_dual} after performing a partial minimization in $w$.
	
	By \cref{lem:strict_feasibility_ogm}, we know that \eqref{eq:optimize_lambdas} is essentially strictly feasible so that strong duality holds and that the $\inf$ can be replaced by a minimum as long as \eqref{eq:optimize_lambdas} is bounded.
	
	Fix optimizers $(\mu, \lambda_\star)$ and $(\xi^*, z^*)$ for \eqref{eq:optimize_lambdas} and \eqref{eq:optimize_lambdas_dual} respectively. Consider
	\[
	\begin{pmatrix} \mu\\ \lambda_\star\end{pmatrix}^{\intercal}
	\begin{pmatrix}
		- LZ^\intercal & \btau\\
		LG^\intercal & \mb 1
	\end{pmatrix}
	\begin{pmatrix} z^*\\\xi^*\end{pmatrix}.
	\]
	Let $z = Z \mu - G \lambda_\star + x_0$ and $\tau = \langle \btau, \mu\rangle + \langle \mb 1, \lambda_\star\rangle$.
	
	On the one hand, we see that
	\[
	\begin{pmatrix} \mu\\ \lambda_\star\end{pmatrix}^{\intercal}
	\begin{pmatrix}
		- LZ^\intercal & \btau\\
		LG^\intercal & \mb 1
	\end{pmatrix}
	\begin{pmatrix} z^*\\\xi^*\end{pmatrix} =
	\begin{pmatrix}L(x_0 - z)\\\tau\end{pmatrix}^\intercal
	\begin{pmatrix} z^*\\\xi^*\end{pmatrix} = L \langle z^*, x_0 - z\rangle + \xi^* \tau.
	\]
	On the other hand, by feasibility of $(\mu, \lambda_\star)$ and $(\xi^*, z^*)$ in \eqref{eq:optimize_lambdas} and \eqref{eq:optimize_lambdas_dual}, we have
	\[
	\begin{pmatrix} \mu\\ \lambda_\star\end{pmatrix}^{\intercal}
	\begin{pmatrix}
		- LZ^\intercal & \btau\\
		LG^\intercal & \mb 1
	\end{pmatrix}
	\begin{pmatrix} z^*\\\xi^*\end{pmatrix} \le
	- \ip{\mu,\bh-f^+_{n-1/2}\btau} - \ip{\lambda_\star, \bq - f^+_{n-1/2}\mb 1} \le 
	\frac{L}{2} (\|x_0\|^2 - \|z\|^2).
	\]
	Combining these inequalities, we obtain
	\[
	\tau \le \frac{L}{2\xi^*}(\|x_0\|^2 - \|z\|^2 - 2 \langle z^*, x_0 - z\rangle) =  \frac{L}{2\xi^*}(\norm{x_0 - z^*}^2 - \norm{z-z^*}^2).
	\]
	By strong duality, $\tau = \frac{L}{2\xi^*}\norm{x_0 - z^*}^2$. Thus, $z = z^*$.
\end{proof}

\label{app:proof_of_eta_identities}
\begin{proof}[Proof of \cref{lem:eta_identities}]
	We will require the following identity:
	\begin{align*}
		\delta_i^2 &= \begin{cases} 
			2\tau_{n,i}&\text{if }i \in[n,N-1]\\
			\tau_{n,N}&\text{if }i=N,
		\end{cases}
	\end{align*}
	which follows from the definition of $\delta$ in~\eqref{eq:delta} and $\tau_{n,i}$ in \eqref{eq:tau_ni}.
	
	From this, we deduce
	\begin{align}
		\label{eq:taupsi_identity}
		\tau_{n,i} (\delta_i-1)&=\begin{cases}
			(\tau_{n-1/2}+ \delta_n)(\delta_n -1)&\text{if }i=n\\
			(\tau_{n,i-1} +\delta_i)(\delta_i - 1)& \text{if }i\in[n+1,N]
		\end{cases} \nonumber\\
		&=\begin{cases}
			\tau_{n-1/2}\delta_n + \tau_{n,n} & \text{if } i = n\\
			\tau_{n,i-1}\delta_i +\tau_{n,i} & \text{if } i\in [n+1,N-1]\\
			\tau_{n,N-1}\delta_N & \text{if } i=N
		\end{cases}
	\end{align}
	and for all $i\in[n,N-1]$,
	\begin{align*}
		2\tau_{n,i}\delta_{i+1}\eta_{i+1}&= 1 + \sum_{j = n}^i\delta_j^2 \eta_j =  1 + \sum_{j = n}^{i-1} \delta_j^2 \eta_j + \delta_i^2 \eta_i\\
		&= \begin{cases}
			2\eta_n\left(\tau_{n-1/2}\delta_n + \tau_{n,n} \right) & \qquad i =n\\
			2\eta_i\left(\tau_{n,i-1}\delta_i + \tau_{n,i} \right)&\qquad i\in[n+1,N-1]
		\end{cases}\\
		&= 2\left(\tau_{n,i}\delta_i - \delta_i^2 + \tau_{n,i}\right)\eta_i\\
		&= 2\tau_{n,i}(\delta_i-1)\eta_i.
	\end{align*}
	Rearranging this identity, we have that for all $i\in[n,N-1]$, that $\delta_{i+1}\eta_{i+1} = (\delta_i - 1)\eta_i$.
	
	We prove the first statement inductively.
	For $i = n$, we have that
	\begin{align*}
		\frac{1}{2\tau_{n,n}(\delta_n - 1)} 
		&= \frac{1}{2(\tau_{n-1/2}\delta_n + \tau_{n,n})}\leq \eta_n = \frac{1}{2\tau_{n-1/2}\delta_n}.
	\end{align*}
	
	Now, suppose $i\in[n+1,N]$, then
	\begin{align*}
		\frac{1}{2\tau_{n,i}(\delta_{i}-1)}
		&\leq \frac{1}{2\tau_{n,i-1}\delta_i}\\
		&\leq \eta_{i} = \frac{(\delta_{i-1}-1)\eta_{i-1}}{\delta_i}\\
		&\leq \frac{(\delta_{i-1}-1)}{2\tau_{n-1/2}\delta_{i-1}\delta_i}\\
		&\leq \frac{1}{2\tau_{n-1/2}\delta_i}.
	\end{align*}
	Here, the first line follows from \eqref{eq:taupsi_identity}, the second line follows by definition of $\eta_i$, and the third line follows by induction.

	For the second claim, suppose $i\in[n,N-1]$.
	Then,
	\begin{align*}
		\tau_{n,i}(\delta_i-1)\eta_i&=\tau_{n,i}\delta_{i+1}\eta_{i+1} \leq \tau_{n,i+1}(\delta_{i+1}-1)\eta_{i+1},
	\end{align*}
	where the inequality follows from \eqref{eq:taupsi_identity}.
	
	For the third claim,
	first suppose $i\in[n,N-1]$ and $j = i+1$. Then, 
	$\delta_j\eta_j = (\delta_i - 1)\eta_i$ so that the claim holds. Now, suppose $n\leq i < j \leq N$. Then, we can chain together this inequality to get
	\begin{align*}
		\delta_j \eta_j &\leq (\delta_{j-1} - 1)\eta_{j-1}\\
		&\leq \delta_{j-1}\eta_{j-1}\\
		&\leq (\delta_{j-2} - 1)\eta_{j-2}\\
		&\leq \dots\\
		&\leq (\delta_{i} - 1)\eta_i.\qedhere
	\end{align*}
\end{proof}

\begin{proof}[Proof of \cref{lem:crossterm2}]
	We handle the $i=n$ and $i>n$ cases separately.
	First, suppose $j< i = n$. Then,
	\begin{align*}
		\ip{g_j, x_n^+ - x_j^+}&= \ip{g_j, x_n^+ - z_{n+1/2}} + \ip{g_j, z_{n+1/2}-x_j^+}\\
		&= \frac{\tau_{n-1/2}}{\tau_{n,n}}\ip{g_j, x_{n-1/2}^+ - z_{n+1/2}} + \ip{g_j, z_{n+1/2}-x_j^+},
	\end{align*}
	where the second line uses the fact that $g_n$ is orthogonal to $g_j$ and the recursive definition of $x_n$.
	
	We will now assume $i>n$ for the remainder of the proof. We expand
	\begin{equation*}
		\ip{g_j, x_i^+ - x_j^+} = 
		\ip{g_j, x_i^+ - z_i} + \ip{g_j, z_i - x_j^+},
	\end{equation*}
	By definition, $z_i = z_{n+1/2}- \sum_{\ell=n}^{i - 1}\frac{\delta_\ell}{L} g_\ell$, thus the second term is
	\begin{equation*}
		\ip{g_j, z_i - x_j^+} = \begin{cases}
			-(\delta_j-1) \eta_j L\Delta&\text{if }j \geq n\\
			\ip{g_j, z_{n+1/2} - x_j^+}&\text{if } j<n.
		\end{cases}
	\end{equation*}
	For the first term, we use the inductive definition of $x_i$, $z_i$, and $\tau_{n,i-1}$ to write
	\begin{align*}
		\ip{g_j, x_i^+ - z_i} 
		&=\ip{g_j, x_i - z_i}\\
		&=\frac{\tau_{n,i-1}}{\tau_{n,i}}\ip{g_j, x_{i-1}^+ - z_i}
		\\
		&= \begin{cases}
			\frac{\tau_{n,n}}{\tau_{n,n+1}}\ip{g_j, x_n^+ - z_{n+1/2} + \frac{\delta_{n}}{L}g_{i-1}}&\text{if }i=n+1\\
			\frac{\tau_{n,i-1}}{\tau_{n,i}}\ip{g_j, x_{i-1}^+ - z_{i-1} + \frac{\delta_{i-1}}{L}g_{i-1}}&\text{if }i\geq n+2.
		\end{cases}
	\end{align*}
	Note that in both cases, if $j<i-1$ then $\ip{g_j, g_{i-1}}=0$. Thus, we may unroll the recursion to get 
	\begin{equation*}
		\ip{g_j, x_i^+ - z_i} = \begin{cases}
			\frac{\tau_{n,j}}{\tau_{n,i}}\ip{g_j, x_{j}^+ - z_{j} + \frac{\delta_{j}}{L}g_{j}}
			&\text{if }j>n\\
			\frac{\tau_{n,n}}{\tau_{n,i}}\ip{g_n, x_{n}^+ - z_{n+1/2} + \frac{\delta_{n}}{L}g_{n}}&\text{if }j = n\\
			\frac{\tau_{n-1/2}}{\tau_{n,i}}\ip{g_j, x_{n-1/2}^+ - z_{n+1/2}}
			& \text{if }j < n.
		\end{cases}
	\end{equation*}
	Next, noting that for $j>n$, $g_j$ is orthogonal to $x_j - z_j$ and for $j= n$, $g_j$ is orthogonal to $x_j - z_{n+1/2}$, we simplify
	\begin{equation*}
		\ip{g_j, x_i^+ - z_i} = \begin{cases}
			\frac{\tau_{n,j}}{\tau_{n,i}}(\delta_j - 1)L\eta_j\Delta
			&\text{if }j\geq n\\
			\frac{\tau_{n-1/2}}{\tau_{n,i}}\ip{g_j, x_{n-1/2}^+ - z_{n+1/2}}
			& \text{if }j < n.
		\end{cases}
	\end{equation*}
	Finally, we combine the summands to get that for $i>n$,
	\begin{equation*}
		\ip{g_j, x_i^+ - x_j^+} = 
		\begin{cases}
			\left(\frac{\tau_{n,j}}{\tau_{n,i}}-1\right)(\delta_j - 1)L\eta_j\Delta
			&\text{if }j\geq n\\
			\frac{\tau_{n-1/2}}{\tau_{n,i}}\ip{g_j, x_{n-1/2}^+ - z_{n+1/2}} +  \ip{g_j, z_{n+1/2} - x_j^+}
			& \text{if }j < n.
		\end{cases}\qedhere
	\end{equation*}
\end{proof}

\begin{proof}
	[Proof of \cref{lem:subopt_value}]
	
	We will require the identity
	\begin{equation*}
		\delta_N^2 = \tau_{n,N},
	\end{equation*}
	which follows from the definition of $\delta$ in~\eqref{eq:delta} and $\tau_{n,i}$ in \eqref{eq:tau_ni}.
	
	Now, we compute
	\begin{align*}
		&f_N - f_\star - \frac{L}{2\tau_{n,N}}\norm{x_0-z_{N+1}}^2\\
		&= \frac{L\Delta}{2}\left((2\delta_N-1)\eta_N - \frac{1}{\tau_{n,N}}
		\left(1  + \sum_{j=n}^{N}
		\delta_j^2 \eta_j \right)\right)\\
		&= \frac{L\Delta}{2}\left(2(\delta_N-1)\eta_N - \frac{1}{\tau_{n,N}}
		\left(1  + \sum_{j=n}^{N-1}
		\delta_j^2 \eta_j \right)\right)\\
		&= \frac{L\Delta}{2}\left(\frac{\delta_N-1}{\tau_{n,N-1}\delta_N} - \frac{1}{\tau_{n,N}}
		\right) \left(1  + \sum_{j=n}^{N-1}
		\delta_j^2 \eta_j \right)\\
		&= \frac{L\Delta}{2}\left(\frac{
			\delta_N(\tau_{n,N-1}+\delta_N) - \tau_{n,N} - \tau_{n,N-1}\delta_N  
		}{\tau_{n,N-1}\tau_{n,N}\delta_N}
		\right) \left(1  + \sum_{j=n}^{i-1}
		\delta_j^2 \eta_j \right).
	\end{align*}
	Here, the second line substitutes definitions from our construction, the third line notes that $\delta_N^2\eta_N = \tau_{n,N}\eta_N$, the fourth line substitutes the definition of $\eta_N$, and the fifth line is algebra.
	
	It remains to observe that the numerator in the second term is identically zero:
	\begin{equation*}
		\delta_N(\tau_{n,N-1}+\delta_N) - \tau_{n,N} - \tau_{n,N-1}\delta_N   =  \delta_N^2 - \tau_{n,N}= 0.\qedhere
	\end{equation*}
	\end{proof}
\end{document}